\documentclass[12pt]{amsart}
\usepackage[utf8]{inputenc}

\title[Aluthge transforms of generalized hyperbolic operators]{Shadowing, Generalized hyperbolicity and Aluthge transforms}
\author{Linh T. T. Tran }
\keywords{hyperbolic operator, generalized hyperbolic operator, shifted hyperbolic operator, Aluthge transform, shadowing property, weighted shift}

\subjclass[2020]{Primary: 37C20; Secondary: 
47A15, 47B49, 47B37}

\usepackage{lipsum}

\newcommand\blfootnote[1]{%
  \begingroup
  \renewcommand\thefootnote{}\footnote{#1}%
  \addtocounter{footnote}{-1}%
  \endgroup
}

\usepackage{amssymb,amstext,amsmath,amscd,amsthm,amsfonts,enumerate,latexsym}
\usepackage{color}
\usepackage[all]{xy}
\usepackage{imakeidx}
\usepackage{cases}
\usepackage{graphicx}
\usepackage{enumitem}
\usepackage{indentfirst}
\usepackage[mathscr]{eucal}
\usepackage{graphicx,color}
\numberwithin{equation}{section}

\theoremstyle{plain}
\newtheorem{theorem}{Theorem}[section]

\newtheorem{lemma}[theorem]{Lemma}
\newtheorem{corollary}[theorem]{Corollary}
\newtheorem{question}[theorem]{Question}

\theoremstyle{definition}
\newtheorem*{definition}{Definition}

\theoremstyle{remark}
\newtheorem{remark}[theorem]{Remark}
\usepackage{lineno}
\usepackage[colorlinks]{hyperref}

\begin{document}
\begin{abstract}
In this note, we introduce the notion of $r$-homoclinic points. We solve invariant subspace problem (ISP for brevity) for shadowing operators on Banach spaces. Afterwards, we verify that the set of generalized hyperbolic operators is invariant under $\lambda$-Aluthge transforms for every $\lambda \in \left( 0,1 \right)$. Next, the Aluthge iterates of invertible operators converge to hyperbolic operators only if the initial operators are hyperbolic. Finally, we prove that the Aluthge iterates of shifted hyperbolic bilateral weighted shifts diverge and that hyperbolic bilateral weighted shifts with divergent Aluthge iterates exist.
\end{abstract}
\blfootnote{Email: tolinh@o.cnu.ac.kr}
\blfootnote{Department of Mathematics, Chungnam National University, Daehakro 99, Yuseong gu, Daejeon, South Korea (34134).}

\maketitle

\section{Introduction}

\noindent
This work is motivated by three questions related to the linear theory of dynamical systems. The first one deals with homoclinic points and their relationship with hyperbolicity. These points were discovered by Henry Poincar\'e in his thesis about the three-body problem of celestial mechanics \cite{an}.
Every hyperbolic operator has not only the shadowing property but also no nonzero homoclinic points. This follows the below intriguing question.

\begin{question}
\label{q1}
Is a linear operator $T$ hyperbolic if and only if $T$ is shadowing and has no nonzero homoclinic points?
\end{question}

The second question is inspried by a long-standing problem in operator theory named Invariant Subspace Problem (abbr. ISP).

\begin{question}
\label{ISP}
Is it true that every operator $T$ on Hilbert spaces $\mathbb{H}$ or, more generally, Banach spaces $X$ has {\em nontrivial $T$-invariant subspaces}, namely a closed subspace $M\notin \left \{ \{0\},X \right \}$ such that $T(M)\subset M$.
\end{question}

 This problem has been intensively studied especially for Hilbert spaces, and positive results have been obtained for many classes of operators.
The answer, however, is negative in general Banach spaces that was shown by Enflo \cite{en} and still open for separable Hilbert spaces. Furthermore, it turns out to relate with the recent question if a shadowing operator is generalized hyperbolic \cite{ddm}. \\
\indent
Generalized hyperbolicity is a new concept in linear dynamics which generalizes hyperbolicity. In fact, every generalized hyperbolic operator is either a uniform contraction or expansion or satisfies ISP following the definition. Hence, it is worth asking whether one can classify shadowing operators in a similar way.

\begin{question}
\label{question shadowing classification}
Is every shadowing operator on Banach space either a uniform contraction or expansion or satisfies ISP?
\end{question}

Our last concern focuses on the dynamics of the Aluthge transforms for linear operators on Hilbert spaces which have attracted the attention of Operator Theorists during the recent 30 years.
The following question was stated by Cirilo et al. \cite{Cirilo1}.

\begin{question} \cite{Cirilo1}
\label{main question}
Does the Aluthge transformation keep the space of generalized hyperbolic operator invariant? Is it true that the sequence of iterates of a shifted hyperbolic operator by the Aluthge transformation does not converge?
\end{question}
In this paper, we will give partial positive answers for above questions.
Regarding to Question \ref{q1}, we introduce the notion of $r$-homoclinic points  and prove that a linear operator is hyperbolic if and only if it is shadowing and has no nonzero $r$-homoclinic points. This result is shown in detail in Lemma \ref{lemma Hr(T) and Ec(T)}, Lemma \ref{lemma T hyperbolic iff EcT closed}, and Theorem \ref{theorem characterize hyperbolicity}.

As regards Question \ref{question shadowing classification}, Theorem \ref{theorem classification shadowing} proves that a shadowing operator on a Banach space $X$ whose set of bounded orbits is not dense, is either a uniform contraction or an expansion or it has nontrivial closed $T$-invariant subspaces.\\

Related to Questions \ref{ISP} and \ref{main question}, 
we show that the sets of generalized hyperbolic operators and shifted hyperbolic operators are invariant under the $\lambda$-Aluthge transforms for every $\lambda \in \left( 0,1 \right)$ by using Theorem \ref{proposition lambda Aluthge transform of GH} and Corollary \ref{Cor lambda Aluthge of SH}.\\

Next, in Theorem \ref{prop L is hyperbolic then T is hyperbolic}, we verify that the Aluthge iterates of an invertible operator converges to a hyperbolic operator only if the initial operator is hyperbolic. Finally, we initiate a discussion about the divergence of Aluthge iterates corresponding to generalized hyperbolic weighted shifts in Theorem \ref{Aluthge iterates of SH weighted shifts is divergent} and Theorem \ref{Theorem Aluthge iterates of hyperbolic operator}. 

\section{Statement of the results}
\label{statemens of the results}
\noindent
Without further mention, we assume that $T$ is an automorphism on a Banach space $X$ through out this article. Hyperbolic operators are determined in \cite{Bernades1} as below.
\begin{definition}
\label{hyperbolic operators-OT}
An invertible operator $T$ on Banach space $X$ is said to be \textit{ hyperbolic} if $\sigma \left( T \right) \cap \mathbb{T} = \varnothing$, where $\mathbb{T}$ is the unit circle of complex plane $\mathbb{C}$.
\end{definition}
Let us denote by $r \left( .\right)$ the spectral radius of the respective operator. We recall that an operator $T:X\to X$ on Banach space $X$ is a {\em uniform contraction}
if its spectral radius $r(T)<1$. Likewise, if $T$ is invertible and $T^{-1}$ is uniformly contract then $T$ is said to be a {\em uniform expansion}. 
An equivalent definition of hyperbolic operator is stated below.
\begin{remark}
\label{remark hyperbolic operators}
On a Banach space $X$, $T \in B \left( X \right)$ is hyperbolic if and only if there is a splitting $X =M \oplus N$, $T = T_M \oplus T_N$, where $M$ and $N$ are closed $T$-invariant subspaces of $X$, $T_M = T \mid M$ and $T_N = T \mid N$, satisfying the following.
\begin{enumerate}
 \item $T \left( M \right) = M$, $T^{-1} \left( N \right) = N$;
 \item $T _M$ and $T^{-1}_N $ are uniform contractions.
\end{enumerate}
\end{remark}

By replacing the equalities by inclusions on item (1) above we obtain the following concept (see Cirilo et al \cite{Cirilo1}, Bernades and Messaoudi \cite{ddm}, and Bernades et al \cite{Bernades1}).

\begin{definition}
\label{def generalized hyperbolic operators}
An automorphism $T$ on a Banach space $X$ is {\em generalized hyperbolic} if there exists a decomposition of $X$ in complementary closed subspaces,
$X =M \oplus N$, satisfying
\begin{enumerate}
 \item $T \left( M \right) \subset M$ and $T^{-1} \left( N \right) \subset N$;
 \item $T_M$ and $T^{-1}_N$ are uniform contractions.
\end{enumerate}
\end{definition}

A generalized hyperbolic operator is said to be {\em shifted hyperbolic} if $T^{-1}(M)\cap N$ is nontrivial. Equivalently, $T$ is a shifted hyperbolic operator if and only if it is generalized hyperbolic but not hyperbolic (see Theorem 2 in \cite{Cirilo1}).

On the other hand, an invertible operator $T: X \longrightarrow X$ has the shadowing property (or $T$ is {\em shadowing}) if for every $\epsilon >0$, there is $ \delta >0$ such that for every sequence $\left \{ x_n \right \}_{n \in \mathbb{Z}}$ with $\left \| T \left( x_n \right) - x_{n+1} \right \| < \delta$ for every $n\in\mathbb{Z}$, there exists $ x \in X$ satisfying $\left \| T^{n} \left( x \right) - x_n \right \| < \epsilon$ for every $ n \in \mathbb{Z}$ (see \cite{Bernades2}).

A point $x \in X$ is called a {\em homoclinic point} (associated with $0$) of $T$ if
  
\begin{align}
    \lim_{n \rightarrow \pm \infty}T^{n} \left( x \right) =0 \label{eq1 homoclinic}.
\end{align}

It follows from Theorem 2 in \cite{Cirilo1} that a linear operator is hyperbolic if and only if it is generalized hyperbolic and has no nonzero homoclinic points. The question whether an invertible operator has the shadowing property if and only if it is generalized hyperbolic was posed in \cite{ddm}. Thus, Question \ref{q1} is derived from these statements by replacing generalized hyperbolicity by shadowing. We give a partial positive answer for this question based on the following definition.


\begin{definition}
\label{def epsilon homoclinic points}
Let $r>0$. We say that $x \in X$ is an {\em $r$-homoclinic point}
if there is $N\in\mathbb{N}$ such that
$\|T^n(x)\|\le r$ for every $n\in\mathbb{Z}$ with
$|n|\geq N$.
\end{definition}
In general, if we denote by $H \left( T \right)$ (resp. $H_{r} \left( T \right)$) the set of homoclinic points associated to $0$ (resp. $r$-homoclinic points) of $T$ then
  
\begin{align*}
    H \left( T \right) = \cap _{r>0} H _{r} \left( T \right).
\end{align*}
  
We have the following remark.
\begin{remark}
\label{remark epsilon homoclinic point}
The following properties are equivalent.
\begin{enumerate}[label=(\roman*)]
\item $\exists r >0$ such that $H_{r}\left( T \right) = \left \{ 0 \right \}$.
\item $H_{r}\left( T \right) = \left \{ 0 \right\}$ for every $r > 0$.
\end{enumerate}
\end{remark}
\begin{proof}
Obviously, item (ii) follows item (i). So, we only need to prove that item (i) follows item (ii). \\
\indent
Let us assume that there exists $r>0$ satisfying (i), $r'>0$ arbitrary and $x \in H_{r'} \left( T \right)$. By definition, there exists $N' \in \mathbb{N}$ associated with $r'$ so that
  
\begin{align*}
    \left \| T^{n} \left( \frac{r}{r'} x \right) \right \| = \frac{r}{r'} \left \| T^{n} \left( x \right) \right \| \le \frac{r}{r'}r' = r, \quad \forall n \in \mathbb{N}, \left| n \right| \ge N'.
\end{align*}
  
\indent Therefore, $\frac{r}{r'}x \in H_{r}\left( T \right) = \left \{ 0 \right \}$, implying $x =0$. Since $r'$ is chosen randomly, item (ii) holds.
\end{proof}
For an arbitrary invertible $T : X \longrightarrow X$ on Banach space $X$, we define the set of points with bounded orbit associated with $T$ by
  
\begin{align*}
    E^{c} \left( T \right) = \left \{ x \in X \mid \sup_{n \in \mathbb{Z}} \left \| T^{n} \left( x \right) \right \| < \infty \right \}. 
\end{align*}
  
\indent 
As a consequence, we characterize hyperbolicity of invertible operator $T \in B \left( X \right)$ by $H_r\left( T \right)$ as our first result below.
\begin{theorem}
\label{theorem characterize hyperbolicity}
An invertible operator $T: X \longrightarrow X$ on Banach space $X$ is hyperbolic if and only if $T$ is shadowing and $H_r\left( T \right)$ is closed for every $r >0$.
\end{theorem}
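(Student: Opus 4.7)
The plan is to prove each direction by invoking the two preparatory results announced in the introduction: Lemma~\ref{lemma Hr(T) and Ec(T)}, which ties $H_r(T)$ to $E^c(T)$, and Lemma~\ref{lemma T hyperbolic iff EcT closed}, which characterizes hyperbolicity of a shadowing operator via closedness of $E^c(T)$. The theorem itself is then largely a packaging of these two lemmas together with the equivalence in Remark~\ref{remark epsilon homoclinic point}.

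For the forward implication, assume $T$ is hyperbolic. Shadowing of hyperbolic operators is a classical fact and is already used throughout \cite{Bernades1, Bernades2}, so I would cite it directly. To see that $H_r(T)$ is closed for every $r>0$, use Remark~\ref{remark hyperbolic operators} to split $X=M\oplus N$ with $T_M$ and $T_N^{-1}$ uniform contractions. A straightforward estimate shows that if $x=m+n$ with $m\ne 0$ then $\|T^{-k}(x)\|\to\infty$, and if $n\ne 0$ then $\|T^{k}(x)\|\to\infty$; hence $E^c(T)=\{0\}$. Since any $r$-homoclinic point has bounded orbit (the tail $|k|\geq N$ is bounded by $r$ and the finite initial segment is automatically bounded), we have $H_r(T)\subseteq E^c(T)=\{0\}$, and so $H_r(T)=\{0\}$ is trivially closed.

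For the backward direction, assume $T$ is shadowing and that $H_r(T)$ is closed for every $r>0$. Here I would apply Lemma~\ref{lemma Hr(T) and Ec(T)} to upgrade closedness of each individual $H_r(T)$ into closedness of $E^c(T)$; in the shadowing regime this presumably takes the form $E^c(T)=H_{r_0}(T)$ for a threshold $r_0$ determined by the shadowing constants, so no troublesome limiting procedure over an ascending union is needed. Once $E^c(T)$ is known to be closed, Lemma~\ref{lemma T hyperbolic iff EcT closed} immediately yields that $T$ is hyperbolic.

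The real work lives in the two lemmas rather than in the theorem itself. The delicate step is inside Lemma~\ref{lemma Hr(T) and Ec(T)} in the backward direction: one has to use the shadowing constants to identify the bounded-orbit set $E^c(T)$ with a single $H_{r_0}(T)$, ruling out the possibility that $E^c(T)$ is only an $F_\sigma$ increasing union $\bigcup_n H_n(T)$ of closed sets whose limit points escape. Matching the thresholds correctly—so that closedness of all $H_r(T)$ actually aggregates (or stabilizes) to closedness of $E^c(T)$—is the technical heart of the argument, and is where shadowing must be used in an essential way.
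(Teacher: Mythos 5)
Your proposal follows essentially the same route as the paper: the forward direction reduces to $E^c(T)=\{0\}$ for hyperbolic $T$ (so $H_r(T)=\{0\}$ by Lemma~\ref{lemma Hr(T) and Ec(T)}), and the backward direction combines Lemma~\ref{lemma Hr(T) and Ec(T)} with Lemma~\ref{lemma T hyperbolic iff EcT closed}. The only point to streamline is your worry about thresholds and $F_\sigma$ unions in the converse: the sandwich $H_r(T)\subseteq E^c(T)\subseteq\overline{H_r(T)}$ of Lemma~\ref{lemma Hr(T) and Ec(T)} holds for \emph{every} fixed $r>0$, so closedness of any single $H_r(T)$ already forces $E^c(T)=H_r(T)$ to be closed, with no limiting procedure needed.
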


We now provide a partial positive answer for Question \ref{question shadowing classification} as follows.
\begin{theorem}
\label{theorem classification shadowing}
If $T$ is an invertible shadowing operator on a Banach space $X$ then one of the following alternatives holds for $T$. 
\begin{enumerate}[label=(\roman*)]
    \item $T$ is a uniform contraction;
    \item $T$ is a uniform expansion;
    \item $E^{c} \left( T \right)$ is dense;
    \item $T$ satisfies ISP.
\end{enumerate}
\end{theorem}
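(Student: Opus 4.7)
My plan is to reduce the theorem to a trichotomy based on the subspace of bounded orbits
\[
E^c(T) = \{x \in X : \sup_{n \in \mathbb{Z}} \|T^n x\| < \infty\}.
\]
A routine check shows that $E^c(T)$ is a linear subspace of $X$ invariant under both $T$ and $T^{-1}$ (the quantity $\sup_n \|T^n x\|$ is unchanged under a shift of the index), so $\overline{E^c(T)}$ is a closed $T$-invariant subspace of $X$. The argument then splits along two dichotomies: whether $E^c(T)$ is dense in $X$, and if not, whether it is trivial.

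If $E^c(T)$ is dense, then (iii) is immediate. If $E^c(T)$ is not dense and $E^c(T)\neq\{0\}$, then $\overline{E^c(T)}$ is a nontrivial proper closed $T$-invariant subspace of $X$, so $T$ satisfies ISP and (iv) holds. The substantive case is $E^c(T)=\{0\}$, and here I would invoke Theorem \ref{theorem characterize hyperbolicity}. Any $r$-homoclinic point $x$ satisfies $\|T^n x\|\le r$ for $|n|\ge N$ while the orbit on the finite window $|n|<N$ takes only finitely many values, hence $\sup_n\|T^n x\|<\infty$. Therefore $H_r(T)\subseteq E^c(T)=\{0\}$ and $H_r(T)=\{0\}$ is trivially closed for every $r>0$. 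Combined with the shadowing hypothesis, Theorem \ref{theorem characterize hyperbolicity} yields that $T$ is hyperbolic, and Remark \ref{remark hyperbolic operators} supplies a splitting $X=M\oplus N$ with $T|_M$ and $T^{-1}|_N$ uniform contractions. According as $N=\{0\}$, $M=\{0\}$, or both $M$ and $N$ are nonzero, this places us in (i), (ii), or (iv) respectively.

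The main obstacle has in fact been absorbed into Theorem \ref{theorem characterize hyperbolicity}; once that characterization of hyperbolicity is in hand, the present proof reduces to a clean case split, and the only extra verification required is the elementary inclusion $H_r(T)\subseteq E^c(T)$. Were Theorem \ref{theorem characterize hyperbolicity} not yet available, the hard step would be producing the hyperbolic decomposition in the regime $E^c(T)=\{0\}$, which is precisely where the shadowing property must do the real work.
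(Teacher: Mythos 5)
Your proof is correct, and it follows the same overall strategy as the paper (extract a closed $T$-invariant subspace, run the trichotomy trivial/proper/dense, and use the Lee--Morales characterization of hyperbolicity when the subspace is trivial), but your choice of subspace is different and arguably cleaner. The paper takes $\overline{H_r(T)}$ as the candidate invariant subspace; since $H_r(T)$ is not obviously closed under addition, the paper must first invoke the shadowing-based sandwich $H_r(T)\subseteq E^c(T)\subseteq\overline{H_r(T)}$ of Lemma \ref{lemma Hr(T) and Ec(T)} just to conclude (in Lemma \ref{lemma Hr(T) is invariant subspace}) that $\overline{H_r(T)}=\overline{E^c(T)}$ is a subspace. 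You instead work directly with $\overline{E^c(T)}$, which is manifestly a closed $T$-invariant subspace with no appeal to shadowing, so the nontrivial inclusion of Lemma \ref{lemma Hr(T) and Ec(T)} is never needed for this theorem; in the case $E^c(T)=\{0\}$ you only use the elementary inclusion $H_r(T)\subseteq E^c(T)$ to feed Theorem \ref{theorem characterize hyperbolicity} (you could even skip $H_r$ entirely and cite Lemma \ref{lemma T hyperbolic iff EcT closed} directly, since $\{0\}$ is closed). You are also more careful than the paper at the final step: the paper asserts that a hyperbolic $T$ with $E^c(T)=\{0\}$ must be a uniform contraction or expansion, which is only true because the remaining possibility (both factors of the hyperbolic splitting nonzero) yields a nontrivial invariant subspace and hence lands in alternative (iv); your explicit three-way split on $M$ and $N$ makes this point correctly.
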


Now we focus on Hilbert spaces $\mathbb{H}$. The Aluthge transform $\Delta(T)$ of a bounded linear operator $T:\mathbb{H}\to \mathbb{H}$,  which has drawn the attention in Operator Theory during the last 30 years, is defined by
  
\begin{align*}
    \Delta \left( T \right) = \left| T \right| ^{\frac{1}{2}} U \left| T \right| ^{\frac{1}{2}},
\end{align*}
  
where $U$ is the unique polar decomposition of
$T = U \left| T \right|$,
$\left| T \right| = \left( T^*T \right)^{\frac{1}{2}}$ and $\ker \left( \left| T \right| \right) = \ker \left( T \right) = \ker \left( U \right)$. I. B. Jung \cite[Theorem 2.1]{IBJ2} shows that
  
\begin{align*}
    \sigma \left( T \right) = \sigma \left( \Delta \left( T \right) \right)
\end{align*}
  
where $\sigma \left( . \right)$ are the corresponding spectra. Moreover, Jung-Ko-Pearcy \cite[Corollary 1.16]{IBJ1} proves that $T$ has nontrivial closed invariant subspaces if and only if $\Delta \left( T \right)$ does.\\
\indent Another reason relates with the Aluthge iterates determined by
  
\begin{align*}
    \begin{cases}
\Delta ^{(0)} \left( T \right) = T \\ 
\Delta ^{(n)} \left( T \right) = \Delta \left( \Delta ^{(n-1)} \left( T \right) \right), \quad\quad\forall n \ge 1,  n \in \mathbb{Z}^{+}.
\end{cases} 
\end{align*}
  
\indent 
The Aluthge iterates of bounded linear operators are convergent to normal operators as $\dim \left( \mathbb{H} \right)<\infty$ that has been verified in \cite[Theorem 4.2]{Antezana1} but not in general as shown in \cite[Corollary 3.2]{IBJ2}.\\
\indent 
The following statement is a generalization of Aluthge transforms which are named $\lambda$-Aluthge transforms in \cite{Okubo1} and researched in \cite{Antezana2, Huang1, Antezana3}.
\begin{definition} 
\label{def lambda Aluthge transform}
Let $T = U \left| T \right| \in B \left( \mathbb{H} \right)$ be the polar decomposition of $T$ in which $\mathbb{H}$ is a Hilbert space. For any number $\lambda \in \left( 0,1 \right)$, the { \it $\lambda$-Aluthge transform} of $T$ is
  
\begin{align*}
    \Delta_{\lambda} \left( T \right) = \left| T \right|^{\lambda} U \left| T \right|^{1-\lambda}.
 \end{align*}
  
Similar to Aluthge iterates, the $\lambda$-Aluthge iterates of $T$ associated with $\lambda \in \left( 0,1 \right)$ is determined as the following.
  
\begin{align*}
\begin{cases}
    \Delta ^{(0)} _{\lambda} \left( T \right) = T\\
    \Delta ^{(n)} _\lambda \left( T \right) = \Delta _{\lambda} \left( \Delta _{\lambda} ^{(n-1)} \left( T \right) \right), \quad \forall \lambda \in \left( 0,1 \right), n \in \mathbb{Z}^{+}.
\end{cases}
\end{align*}
  
Each element $\Delta _{\lambda} ^{(n)} \left( T \right)$ is called an $n$-th $\lambda$-Aluthge transform of $T$.
\end{definition}

Our below results show partial positive answers for Question \ref{main question}.

\begin{theorem}
\label{proposition lambda Aluthge transform of GH}
For an arbitrary number $0 < \lambda < 1$, the generalized hyperbolicity of bounded linear operators on Hilbert spaces is invariant under $\lambda$-Aluthge transforms.
In particular, for $\lambda = \frac{1}{2}$, T is a generalized hyperbolic operator if and only if its Aluthge transform $\Delta \left( T \right)$ is.
\end{theorem}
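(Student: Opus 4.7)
The plan is to exploit a similarity identity hidden inside the definition of $\Delta_{\lambda}$. When $T$ is invertible, the positive factor $\left| T \right| = (T^{*}T)^{1/2}$ has trivial kernel and hence is itself invertible, so the Borel functional calculus yields a bounded inverse $\left| T \right|^{-\lambda}$ for every $\lambda \in (0,1)$. The first step I would carry out is to rewrite
\begin{align*}
\Delta_{\lambda}(T) = \left| T \right|^{\lambda} U \left| T \right|^{1-\lambda} = \left| T \right|^{\lambda} \bigl( U \left| T \right| \bigr) \left| T \right|^{-\lambda} = \left| T \right|^{\lambda}\, T\, \left| T \right|^{-\lambda},
\end{align*}
so that $\Delta_{\lambda}(T)$ is similar to $T$ via the bounded invertible operator $A := \left| T \right|^{\lambda}$.

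The main step is then to transport generalized hyperbolicity across this similarity. Assuming $T$ is generalized hyperbolic with splitting $\mathbb{H} = M \oplus N$ as in Definition \ref{def generalized hyperbolic operators}, I would propose $\mathbb{H} = A(M) \oplus A(N)$ as the splitting witnessing generalized hyperbolicity of $\Delta_{\lambda}(T)$. Since $A$ is a topological isomorphism of $\mathbb{H}$, the images $A(M)$ and $A(N)$ are closed and still form a topological direct sum. Invariance transfers formally: $\Delta_{\lambda}(T)\bigl(A(M)\bigr) = A\,T(M) \subset A(M)$ and $\Delta_{\lambda}(T)^{-1}\bigl(A(N)\bigr) = A\,T^{-1}(N) \subset A(N)$. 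The restriction of $\Delta_{\lambda}(T)$ to $A(M)$ is conjugate via $A|_{M}$ to $T|_{M}$ and hence has the same spectrum, so its spectral radius is strictly less than $1$; the identical computation controls $\Delta_{\lambda}(T)^{-1}$ on $A(N)$.

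For the converse direction I would apply the same transport argument in reverse, using the identity $T = A^{-1}\, \Delta_{\lambda}(T)\, A$ with intertwiner $A^{-1}$. This gives both directions of the invariance statement for every $\lambda \in (0,1)$, and specialising to $\lambda = 1/2$ recovers the final claim concerning the ordinary Aluthge transform $\Delta(T)$.

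I do not foresee a serious obstacle: the whole argument hinges on noticing that invertibility of $T$ promotes the formal formula for $\Delta_{\lambda}(T)$ into an honest conjugation by the positive invertible operator $\left| T \right|^{\lambda}$. The only point requiring mild care is verifying that the transported subspaces remain closed and complementary, but this is immediate from $A$ being a Banach space isomorphism of $\mathbb{H}$.
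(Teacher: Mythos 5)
Your proposal is correct and follows essentially the same route as the paper: the paper likewise rewrites $\Delta_{\lambda}(T)=\left|T\right|^{\lambda}T\left|T\right|^{-\lambda}$ and then invokes its Lemma \ref{main lemma}, which is exactly your inlined argument that generalized hyperbolicity transports across a bounded similarity (closedness of the image splitting, formal transfer of the invariance inclusions, and preservation of the spectral radii of the restrictions). The only cosmetic difference is that the paper isolates the similarity-invariance step as a separate lemma rather than carrying the splitting through by hand inside the proof.
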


 \begin{theorem}
\label{prop L is hyperbolic then T is hyperbolic}
Let $T$ be an invertible operator on Hilbert space $\mathbb{H}$. If the Aluthge iterates $\Delta^{(n)} \left( T \right)$ of $T$ converges to an invertible hyperbolic operator then $T$ is hyperbolic.
\end{theorem}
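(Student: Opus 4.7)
My plan is to combine the spectral invariance of the Aluthge transform with the upper semicontinuity of the spectrum on $B(\mathbb{H})$. Set $L:=\lim_{n\to\infty}\Delta^{(n)}(T)$; the target is to show that $\sigma(T)\cap\mathbb{T}=\varnothing$, which together with the standing invertibility of $T$ will establish hyperbolicity by Definition \ref{hyperbolic operators-OT}.

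First I would invoke the identity $\sigma(S)=\sigma(\Delta(S))$ of I.\,B.~Jung quoted above, valid for every $S\in B(\mathbb{H})$, and iterate it to obtain
\[
\sigma\bigl(\Delta^{(n)}(T)\bigr)=\sigma(T)\qquad\text{for every } n\ge 0.
\]
In other words, the spectrum is \emph{constant} along the Aluthge sequence, even though the operators themselves move in $B(\mathbb{H})$.

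Next I would apply the standard upper semicontinuity of the spectrum: if $S_n\to S$ in operator norm and $U$ is any open neighborhood of $\sigma(S)$, then $\sigma(S_n)\subset U$ for all sufficiently large $n$. Specializing to $S_n=\Delta^{(n)}(T)$ and $S=L$, and recalling that $\sigma(\Delta^{(n)}(T))=\sigma(T)$ for every $n$, this yields $\sigma(T)\subset U$ for every open neighborhood $U$ of $\sigma(L)$. Intersecting over the open $\varepsilon$-neighborhoods of the closed set $\sigma(L)$ forces $\sigma(T)\subset\sigma(L)$. Since $L$ is hyperbolic,
\[
\sigma(T)\cap\mathbb{T}\subset\sigma(L)\cap\mathbb{T}=\varnothing,
\]
which combined with the invertibility of $T$ gives the desired conclusion.

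I do not anticipate a serious obstacle. The two ingredients, Jung's spectral identity and the upper semicontinuity of the spectrum, are both off the shelf, and invertibility of $T$ (by hypothesis) and of $L$ enters only to legitimize Definition \ref{hyperbolic operators-OT}. The one point deserving mild care is that upper semicontinuity cannot be strengthened to continuity in general, so one should not expect the equality $\sigma(T)=\sigma(L)$; fortunately the weaker containment $\sigma(T)\subset\sigma(L)$ is all that the argument requires.
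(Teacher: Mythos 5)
Your proof is correct and follows essentially the same route as the paper: both arguments combine Jung's spectral identity $\sigma(S)=\sigma(\Delta(S))$, iterated to make the spectrum constant along the Aluthge sequence, with the upper semicontinuity of the spectrum to conclude that $\sigma(T)$ misses the unit circle. The only difference is cosmetic: the paper packages the semicontinuity step as a separate lemma asserting that hyperbolicity is open in the norm topology, whereas you inline it directly (and in fact state the semicontinuity more carefully, deducing only the containment $\sigma(T)\subset\sigma(L)$ rather than any equality of spectra).
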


\begin{theorem}
\label{Aluthge iterates of SH weighted shifts is divergent}
The Aluthge iterates of shifted hyperbolic bilateral weighted shifts on $l^2 \left( \mathbb{Z} \right)$ diverge.
\end{theorem}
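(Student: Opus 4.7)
The plan is to argue by contradiction: I assume $\Delta^{(n)}(T)$ converges in operator norm to some limit $L$, and show that the shape of $L$ forces $\sigma(T)$ to lie on a single circle, which contradicts the spectral consequences of shifted hyperbolicity. After replacing $T$ by a unitarily equivalent operator via a suitable diagonal unitary (which commutes with $\Delta$), I may assume the weights $(w_n)_{n\in\mathbb{Z}}$ of $T$ are positive. A direct computation from the polar decomposition $T=U|T|$, with $|T|e_n = w_n e_n$ and $U e_n = e_{n+1}$, shows that the Aluthge transform of a bilateral weighted shift with positive weights is again a bilateral weighted shift, with weights $(w_n w_{n+1})^{1/2}$. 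By induction, $\Delta^{(n)}(T)$ is a weighted shift whose weights $w^{(n)}_k$ obey the recursion $w^{(n+1)}_k = (w^{(n)}_k w^{(n)}_{k+1})^{1/2}$.

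Next I identify the putative limit $L$. For bilateral weighted shifts $S_v, S_{v'}$ on $\ell^2(\mathbb{Z})$ one checks $\|S_v - S_{v'}\| = \sup_k |v_k - v'_k|$, so operator-norm convergence is equivalent to uniform convergence of the weights. If $\Delta^{(n)}(T)\to L$ in norm, then $w^{(n)}_k\to w^\infty_k$ uniformly in $k$ and $L$ is the weighted shift with weights $(w^\infty_k)$. Passing to the limit in the recursion yields $w^\infty_k = (w^\infty_k w^\infty_{k+1})^{1/2}$, and since $w^{(n)}_k \ge \inf_j w_j > 0$ (invertibility of $T$) the limit weights remain uniformly positive. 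Squaring and dividing by $w^\infty_k$ then forces $w^\infty_k = w^\infty_{k+1}$ for every $k$, hence $w^\infty_k \equiv c$ is constant. Consequently $L = cS$ where $S$ is the unweighted bilateral shift, so $\sigma(L) = \{z\in\mathbb{C} : |z|=c\}$ is a single circle.

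Finally, I derive the contradiction with shifted hyperbolicity. Since $\sigma(\Delta^{(n)}(T))=\sigma(T)$ for every $n$ (Jung's theorem quoted in the introduction) and the spectrum is upper semicontinuous under norm convergence, I obtain $\sigma(T)\subseteq \sigma(L) = \{|z|=c\}$. On the other hand, a shifted hyperbolic $T$ admits a splitting $\ell^2(\mathbb{Z}) = M\oplus N$ with both $M, N$ nonzero and with $T|_M$, $T^{-1}|_N$ uniform contractions, so $r(T|_M) < 1$ and $r(T^{-1}|_N) < 1$. Using the standard inclusions $\partial\sigma(T|_M) \subseteq \sigma_{ap}(T|_M) \subseteq \sigma_{ap}(T) \subseteq \sigma(T)$, and the analogue applied to $T^{-1}|_N$, I extract a point of $\sigma(T)$ of modulus $\le r(T|_M) < 1$ and a point of modulus $\ge 1/r(T^{-1}|_N) > 1$; these cannot both lie on the single circle $\{|z|=c\}$. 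I anticipate the most delicate step to be the passage from the recursion to a constant limit: it relies on the equivalence between norm convergence and uniform weight convergence for bilateral weighted shifts, on the uniform positive lower bound for $w^{(n)}_k$ inherited from $T$, and on the clean form of the Aluthge iterate for a weighted shift.
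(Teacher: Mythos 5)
Your proof is correct, and its central step takes a genuinely different route from the paper's. The paper identifies the putative limit by invoking Ch\={o}--Jung--Lee's theorem (its Lemma on hyponormal shifts) that the Aluthge iterates of a \emph{hyponormal} bilateral weighted shift converge to a quasinormal operator only when the weights are constant, and then contradicts the resulting circular spectrum with the annulus description of $\sigma(T)$. You instead compute the Aluthge transform of a weighted shift explicitly, obtain the recursion $w^{(n+1)}_k=(w^{(n)}_k w^{(n)}_{k+1})^{1/2}$, and pass to the limit using the identity $\|S_v-S_{v'}\|=\sup_k|v_k-v'_k|$ together with the uniform positive lower bound on the weights inherited from invertibility; this forces the limit weights to be constant with no hyponormality assumption. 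That is a real gain: a shifted hyperbolic weighted shift need not have monotone weights, so the paper's appeal to the hyponormal convergence theorem is not obviously applicable in general, whereas your computation covers every invertible positive-weight shift (and the reduction to positive weights by a diagonal unitary is legitimate since unitary conjugation commutes with $\Delta$). Your final contradiction is also more carefully grounded: rather than quoting the annulus formula for $\sigma(T)$, you extract from the generalized hyperbolic splitting, via $\partial\sigma(T|_M)\subseteq\sigma_{ap}(T|_M)\subseteq\sigma(T)$ and its analogue for $T^{-1}|_N$, spectral points of modulus strictly less than $1$ and strictly greater than $1$, which cannot both lie on the single circle $\sigma(L)=\{\,z:|z|=c\,\}$. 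Both arguments then close in the same way, using $\sigma(\Delta^{(n)}(T))=\sigma(T)$ and upper semicontinuity of the spectrum to force $\sigma(T)\subseteq\sigma(L)$.
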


\begin{theorem}
\label{Theorem Aluthge iterates of hyperbolic operator}
There exists a hyperbolic operator on some Hilbert spaces whose Aluthge iterates diverge.
\end{theorem}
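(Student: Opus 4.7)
The plan is to deduce the theorem directly from Theorem \ref{Aluthge iterates of SH weighted shifts is divergent} by rescaling a shifted hyperbolic bilateral weighted shift. The key observation is that multiplying an invertible operator by a small positive scalar contracts its spectrum strictly inside the open unit disk, while the Aluthge iterates of the rescaled operator coincide with those of the original one up to the same scalar factor. Hence the construction of a hyperbolic operator with divergent Aluthge iterates reduces to the divergence already known in the shifted hyperbolic case.

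The first step is a scaling identity: for every $\alpha > 0$ and every $T \in B(\mathbb{H})$ with polar decomposition $T = U |T|$, the operator $\alpha T$ has polar decomposition $U (\alpha |T|)$, since $\alpha |T| \ge 0$ and the kernel condition $\ker(U) = \ker(T) = \ker(\alpha T)$ is unchanged. Therefore $|\alpha T| = \alpha |T|$ and
\begin{align*}
\Delta(\alpha T) = |\alpha T|^{1/2}\, U\, |\alpha T|^{1/2} = \alpha\, |T|^{1/2}\, U\, |T|^{1/2} = \alpha\, \Delta(T).
\end{align*}
An immediate induction gives $\Delta^{(n)}(\alpha T) = \alpha\, \Delta^{(n)}(T)$ for all $n \in \mathbb{N}$, so the sequence $\{\Delta^{(n)}(\alpha T)\}_n$ converges in operator norm if and only if $\{\Delta^{(n)}(T)\}_n$ does.

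For the construction, fix a shifted hyperbolic bilateral weighted shift $T_0$ on $l^2(\mathbb{Z})$, e.g.\ the shift with weights $w_n = 1/2$ for $n \ge 0$ and $w_n = 2$ for $n < 0$. By Theorem \ref{Aluthge iterates of SH weighted shifts is divergent}, the iterates $\{\Delta^{(n)}(T_0)\}_n$ diverge. Pick any $\alpha \in (0,\, 1/r(T_0))$ and set $T := \alpha T_0$, which is invertible. Then $r(T) = \alpha\, r(T_0) < 1$, so $\sigma(T) \subset \{z \in \mathbb{C} : |z| < 1\}$ and in particular $\sigma(T) \cap \mathbb{T} = \varnothing$; thus $T$ is hyperbolic in the sense of Definition \ref{hyperbolic operators-OT}. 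On the other hand, the scaling identity gives $\Delta^{(n)}(T) = \alpha\, \Delta^{(n)}(T_0)$, and since the right-hand side does not converge, neither does $\{\Delta^{(n)}(T)\}_n$, yielding the desired example.

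The only delicate ingredient is the scaling identity for the Aluthge transform, which rests on the uniqueness of the polar decomposition under the kernel convention $\ker(U) = \ker(|T|) = \ker(T)$. Everything else reduces to invoking Theorem \ref{Aluthge iterates of SH weighted shifts is divergent} together with the elementary spectral-radius identity $r(\alpha T_0) = \alpha\, r(T_0)$, so no substantial obstacle is anticipated.
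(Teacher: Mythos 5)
Your proof is correct, but it takes a different route from the paper. The paper constructs an explicit example directly: the hyponormal bilateral weighted shift with weights $\alpha_n = 2$ for $n \le 0$ and $\alpha_n = 3$ for $n > 0$, whose spectrum is the annulus $\left\{ 2 \le |\lambda| \le 3 \right\}$ (hence disjoint from $\mathbb{T}$, so the operator is hyperbolic), and whose Aluthge iterates diverge by the Ch\={o}--Jung--Lee criterion (Lemma \ref{hyponormal weighted shift diverges}), since $\inf \alpha_n = 2 \neq 3 = \sup \alpha_n$. You instead exploit the homogeneity $\Delta(\alpha T) = \alpha\,\Delta(T)$ of the Aluthge transform (which is correct: $|\alpha T| = \alpha |T|$ for $\alpha>0$ and the partial isometry is unchanged, so the identity follows and propagates through the iterates) to rescale a shifted hyperbolic weighted shift until its spectrum sits inside the open unit disk. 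Your argument is arguably more conceptual: it shows that \emph{any} shifted hyperbolic weighted shift yields, after scaling, a hyperbolic operator with divergent iterates, and it reuses Theorem \ref{Aluthge iterates of SH weighted shifts is divergent} rather than re-invoking the external lemma. The trade-offs: your proof is logically dependent on Theorem \ref{Aluthge iterates of SH weighted shifts is divergent} (which is legitimate, as it precedes this statement), whereas the paper's example is independent of it; and your resulting operator is a uniform contraction (a degenerate hyperbolic operator with $M = X$, $N = \{0\}$), while the paper's is a uniform expansion with spectrum well separated from $\mathbb{T}$ --- both satisfy Definition \ref{hyperbolic operators-OT}, so either suffices for the theorem as stated.
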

We now verify aforementioned results in detail.
\section{Proof of the theorems}
\noindent
The following lemma exhibits the relationship between $H_r \left( T \right)$ and $E^{c} \left( T \right)$, which is applied for verifying Theorems \ref{theorem characterize hyperbolicity} and \ref{theorem classification shadowing}.
\begin{lemma}
\label{lemma Hr(T) and Ec(T)}
If $T : X \longrightarrow X$ has the shadowing property on Banach space $X$ then
  
\begin{align}
    H_r \left( T \right) \subseteq E^{c} \left( T \right) \subseteq \overline{H_r\left( T \right)}, \quad \left( \forall r>0 \right)\label{eq inclusion} .
\end{align}
  
\end{lemma}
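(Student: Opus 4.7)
The plan is to establish the two inclusions separately; only the second requires the shadowing hypothesis.

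\emph{The inclusion} $H_r(T) \subseteq E^c(T)$ is immediate from the definitions: if $x \in H_r(T)$, there exists $N \in \mathbb{N}$ with $\|T^n(x)\| \leq r$ for all $|n| \geq N$, and the finitely many norms $\|T^n(x)\|$ with $|n| < N$ are bounded by some constant, giving $\sup_{n\in\mathbb{Z}} \|T^n(x)\| < \infty$.

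\emph{For the inclusion} $E^c(T) \subseteq \overline{H_r(T)}$, fix $x \in E^c(T)$ with $K := \sup_{n} \|T^n(x)\|$, which may be assumed positive (otherwise $x=0 \in H_r(T)$). Given $\epsilon > 0$, set $\eta := \min\{r,\epsilon\}/2$ and use the shadowing property to obtain $\delta > 0$ such that every $\delta$-pseudo-orbit is $\eta$-shadowed. The idea is to construct a $\delta$-pseudo-orbit which coincides with the orbit of $x$ at time zero and vanishes at infinity; its shadow will then supply the required $r$-homoclinic point. Choose $M \in \mathbb{N}$ with $M > K/\delta$ and set
\begin{equation*}
z_n := g(n)\, T^n(x), \qquad g(n) := \max\{1 - |n|/M,\, 0\}, \quad n \in \mathbb{Z}.
\end{equation*}
Then $z_0 = x$ and $z_n = 0$ for $|n| \geq M$. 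A direct computation gives
\begin{equation*}
\|T(z_n) - z_{n+1}\| = |g(n) - g(n+1)| \cdot \|T^{n+1}(x)\| \leq K/M < \delta,
\end{equation*}
so $\{z_n\}_{n\in\mathbb{Z}}$ is a $\delta$-pseudo-orbit. Shadowing produces $y \in X$ with $\|T^n(y) - z_n\| < \eta$ for every $n$; evaluating at $n=0$ gives $\|y - x\| < \eta \leq \epsilon$, and for $|n| \geq M$ we obtain $\|T^n(y)\| < \eta \leq r$, so $y \in H_r(T)$.

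The main obstacle is the choice of pseudo-orbit. The naive truncation $z_n = T^n(x)$ for $|n| \leq N$ and $z_n = 0$ otherwise produces a jump of magnitude $\|T^{\pm N}(x)\|$ at the cutoff, which for a bounded orbit that fails to decay need not become small for any $N$, and hence fails to be a $\delta$-pseudo-orbit for small $\delta$. The remedy is to distribute the descent over an $M$-step buffer via the linear hat $g$, replacing one large jump by $M$ small jumps of size at most $K/M$; taking $M > K/\delta$ keeps each one under $\delta$ and makes the shadowing property directly applicable. Once this construction is in place, both the closeness $\|y-x\| < \epsilon$ and the $r$-homoclinic condition fall out of the same shadow $y$.
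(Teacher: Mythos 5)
Your proof is correct and follows essentially the same strategy as the paper: the first inclusion is read off from the definition, and the second is obtained by tapering the bounded orbit of $x$ to zero with a slowly varying scalar sequence whose consecutive increments are below $\delta/K$, then shadowing the resulting $\delta$-pseudo-orbit. The only cosmetic difference is that you use an explicit linear hat $g(n)=\max\{1-|n|/M,0\}$ where the paper posits an abstract sequence $\beta_n\in(0,1]$ with $\beta_0=1$, small increments, and $\beta_n\to 0$; the estimates and conclusion are the same.
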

\begin{proof}
By the definition of $H_r\left( T \right)$, $H_r \left( T \right) \subseteq E^{c} \left( T \right)$ (for every $r >0$) trivially. Therefore, it is necessary to show that $E^{c} \left( T \right) \subseteq \overline{H_{r} \left( T \right)}$ for every $ r >0$.\\
Since $\epsilon$ can be chosen arbitrarily, without loss of generality, let us take $r>0$ such that $0< \epsilon <r$  and $x \in E^{c}\left( T \right)$. Since $T$ has shadowing property, there exists $\delta >0$ corresponding to $\frac{\epsilon}{2}$ which makes $T$ become $\frac{\epsilon}{2}$-shadowing. We choose $0<M< \infty$ such that
  
\begin{align*}
    \sup_{n \in \mathbb{Z}} \left \| T^{n} \left( x \right) \right \| <M.
\end{align*}
  
Besides, let $\left \{\beta_n \right \}_{n \in \mathbb{Z}} \subseteq \left( 0,1 \right]$ be a sequence defined by
  
\begin{align*}
    \begin{cases}
\beta_{0}=1\\
\left| \beta_{n}-\beta_{n+1} \right| < \frac{\delta}{2M} \quad \left( \forall n \in \mathbb{Z} \right).\\
\beta_n \longrightarrow 0 \text{ as }n \longrightarrow \pm \infty.
\end{cases}
\end{align*}
  
We define sequence $\left \{ x_n \right \}_{n \in \mathbb{Z}} \subseteq X$ by
  
\begin{align*}
    x_n = \beta_n T^{n} \left( x \right), \quad \forall n \in \mathbb{Z}.
\end{align*}
  
It leads to
  
\begin{align*}
    \left \| T \left( x_n \right) -x_{n+1} \right \| &= \left \| T \left( \beta_{n} T^{n} \left(x \right) \right) - \beta_{n+1} T^{n+1} \left( x \right) \right \|\\
    &= \left \| \beta_n T^{n+1} \left( x \right) - \beta_{n+1} T^{n+1} \left( x \right) \right \|\\
    &= \left | \beta_{n}-\beta_{n+1} \right | \left \| T^{n+1} \left( x \right) \right \| \\
    & < \frac{\delta}{2M}.M =\frac{\delta}{2} < \delta, \quad \forall n \in \mathbb{Z}.
\end{align*}
  
It turns out that $\left \{ x_n \right \}_{n \in \mathbb{Z}}$ is a $\delta$-pseudotrajectory of $T$. Hence, there is $y\in X$ so that
  
\begin{align*}
    \left \| T^{n} \left( y \right) - x_n \right \| \le \frac{\epsilon}{2}, \quad \forall n \in \mathbb{Z}
\end{align*}
  
because $T$ is shadowing. On the other side,
  
\begin{align*}
    \left \| T^{n} \left( y \right) \right \| &= \left\| T^{n} \left( y \right) - x_n + x_n \right\| \\
    & \le \left \| T^{n} \left( y \right) - x_n \right \| + \left \| x_n \right \|\\
    & \le \frac{\epsilon}{2} + \beta_{n} \left \| T^{n} \left( x \right) \right \|\\
    &\le \frac{\epsilon}{2} + \beta_n M \longrightarrow \frac{\epsilon}{2}
\text{ as } n \longrightarrow \infty.
\end{align*}
  
It follows the existence of $N \in \mathbb{N}$ so that
  
\begin{align*}
    \left \| T^{n} \left( y \right) \right\| < \epsilon < r, \quad \forall n \in \mathbb{Z}, \left| n \right| \ge N.
\end{align*}
  
In other words, $y \in H_{r} \left( T \right)$. Moreover,
  
\begin{align*}
    \left \| x-y \right \| = \left \| x_0 - T^{0} \left( y \right) \right \| \le \frac{\epsilon}{2} < \epsilon.
\end{align*}
  
So, $x \in \overline{ H_{r} \left( T \right)}$ and we obtain the desired result.
\end{proof}

It is easy to see that $E^{c} \left( T \right)$ is a $T$-invariant subspace. In the following lemma, we verify that $\overline{H_r\left( T \right)}$ is a $T$-invariant subspace of $X$ as equation \eqref{eq inclusion} holds.

\begin{lemma}
\label{lemma Hr(T) is invariant subspace}
Let $T: X \longrightarrow X$ be an invertible linear operator on Banach space $X$ so that equation \eqref{eq inclusion} holds. Then $\overline{H_r\left( T \right)}$ is a closed $T$-invariant subspace of $X$ for every $r>0$.
\end{lemma}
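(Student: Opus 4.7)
The plan is to exploit the two-sided inclusion $H_r(T) \subseteq E^c(T) \subseteq \overline{H_r(T)}$ provided by Lemma \ref{lemma Hr(T) and Ec(T)}, which upon taking closures forces the identity $\overline{H_r(T)} = \overline{E^c(T)}$. Since closure preserves both linearity and $T$-invariance (the latter because $T$ is continuous), it then suffices to verify that $E^c(T)$ itself is a $T$-invariant linear subspace of $X$, and to transfer these two properties to the closure.

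First I would check that $E^c(T)$ is a linear subspace. This is immediate from the triangle inequality: if $x,y \in E^c(T)$ with $\sup_{n\in\mathbb{Z}} \|T^n x\| \le M_x$ and $\sup_{n\in\mathbb{Z}} \|T^n y\| \le M_y$, then for any scalars $\alpha,\beta$ we have $\sup_{n\in\mathbb{Z}} \|T^n(\alpha x + \beta y)\| \le |\alpha|M_x + |\beta|M_y < \infty$. Next, $T$-invariance follows from an index shift: for $x \in E^c(T)$, $\sup_{n\in\mathbb{Z}} \|T^n(Tx)\| = \sup_{n\in\mathbb{Z}} \|T^{n+1}x\| \le \sup_{n\in\mathbb{Z}} \|T^n x\| < \infty$, so $Tx \in E^c(T)$.

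Finally, I would pass to closures. In a Banach space the closure of a linear subspace is a closed linear subspace, hence $\overline{E^c(T)}$ is a closed subspace of $X$. Continuity of $T$ gives $T(\overline{E^c(T)}) \subseteq \overline{T(E^c(T))} \subseteq \overline{E^c(T)}$, which is the required $T$-invariance. Combining this with the identity $\overline{H_r(T)} = \overline{E^c(T)}$ established at the outset yields the lemma.

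There is no serious obstacle here: the substantive analytic content was already packaged into Lemma \ref{lemma Hr(T) and Ec(T)}, and what remains is essentially algebraic cleanup. One subtle point worth flagging is that $H_r(T)$ itself is generally \emph{not} a subspace — the sum of two $r$-homoclinic points is only obviously a $2r$-homoclinic point — which is precisely why the argument must be routed through $E^c(T)$ rather than attempted directly on $H_r(T)$.
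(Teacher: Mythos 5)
Your proof is correct and follows essentially the same route as the paper: both arguments use the sandwich $H_r(T)\subseteq E^c(T)\subseteq \overline{H_r(T)}$ to identify $\overline{H_r(T)}$ with $\overline{E^c(T)}$ and then import linearity and $T$-invariance from the subspace $E^c(T)$. Your closing observation that $H_r(T)$ itself need not be a subspace is exactly the point that makes this detour necessary, and is implicit in the paper's argument as well.
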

\begin{proof}
By definition, $H_r\left( T \right)$ is a $T$-invariant subset. Besides, $E^{c} \left( T \right)$ is a subspace of $X$ and $0 \in H_{r}\left( T \right)$. Thus $0 \in \overline{H_{r}\left( T \right)}$.\\
We now fix $x$, $y \in \overline{H_r \left( T \right)}$ and $\lambda \in \mathbb{C}$ then there exist sequences $x_n$, $y_n \in H_{r} \left( T \right)$ with
  
\begin{align*}
    \begin{cases}
x_n \longrightarrow x\\
y_n \longrightarrow y
\end{cases} \text{as } n \longrightarrow \infty.
\end{align*}
  
Since $H_{r} \left( T \right) \subseteq E^{c}\left( T \right)$ in equation \eqref{eq inclusion},
  
\begin{align*}
    x_n + \lambda y_n \in E ^{c} \left( T \right).
\end{align*}
  
Furthermore,
  
\begin{align*}
    x + \lambda y = \lim_{n \longrightarrow \infty} \left( x_n + \lambda y_n \right) \in \overline{E^{c} \left( T \right)} = \overline{H_r\left( T \right)}.
\end{align*}
  
Finally, $\overline{H_r\left( T \right)}$ is now a $T$-invariant subspace of $X$.
\end{proof}
The below result was proved in \cite[Theorem 3]{lm}.
\begin{lemma}\cite[Theorem 3]{lm}
\label{lemma T hyperbolic iff EcT closed}
A linear operator $T:X \longrightarrow X$ on Banach space $X$ is hyperbolic if and only if $T$ has shadowing property and $E^c \left( T \right)$ is closed.
\end{lemma}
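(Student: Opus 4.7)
I split into the two implications. For $(\Rightarrow)$: every hyperbolic operator has the shadowing property (a standard fact, compatible with the structure of Remark~\ref{remark hyperbolic operators} via componentwise correction of pseudotrajectories), and for hyperbolic $T$ with splitting $X=M\oplus N$ one has $E^c(T)=\{0\}$, which is trivially closed. The latter follows because, for nonzero $x_M\in M$, the identity $\|x_M\|\le\|T^n|_M\|\cdot\|T^{-n}x_M\|$ combined with the exponential decay of $\|T^n|_M\|$ forces $\|T^{-n}x_M\|\to\infty$, and a symmetric argument handles nonzero $x_N\in N$.

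For $(\Leftarrow)$: suppose $T$ is shadowing and $E^c(T)$ is closed. Lemma~\ref{lemma Hr(T) and Ec(T)} then sharpens to $E^c(T)=\overline{H_r(T)}$ for every $r>0$, and Lemma~\ref{lemma Hr(T) is invariant subspace} makes $E^c(T)$ a closed $T$-invariant subspace. I would introduce the candidate stable and unstable subspaces
\[ M_s=\{x\in X:T^nx\to 0 \text{ as } n\to\infty\},\quad M_u=\{x\in X:T^{-n}x\to 0 \text{ as } n\to\infty\}, \]
both closed and $T$-invariant, and aim to establish $X=M_s\oplus M_u$ with $T|_{M_s}$ and $T^{-1}|_{M_u}$ uniform contractions, which is the definition of hyperbolicity.

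The heart of the argument is the decomposition $X=M_s\oplus M_u$. Given $x\in X$, my plan is to adapt the truncation-and-rescaling trick from the proof of Lemma~\ref{lemma Hr(T) and Ec(T)}: construct a pseudotrajectory that coincides with $T^nx$ for $n\le -N$, is gradually damped to $0$ as $n\to +\infty$ via multipliers $\beta_n\downarrow 0$, and has small consecutive jumps; shadowing then supplies a true orbit of some point $u$, and one verifies $u\in M_u$ and $x-u\in M_s$. Closedness of $E^c(T)$ enters decisively here: without it, the limit $u$ could lie in $\overline{H_r(T)}\setminus(M_s+M_u)$, namely in a layer of bounded but non-decaying orbits. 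Once the splitting is in place, uniform contractivity of $T|_{M_s}$ and $T^{-1}|_{M_u}$ follows via the uniform boundedness principle applied to $\{T^n|_{M_s}\}_{n\ge 0}$ and $\{T^{-n}|_{M_u}\}_{n\ge 0}$, together with the spectral radius formula.

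\textbf{Main obstacle.} The splitting $X=M_s\oplus M_u$ is the crux: the interpolating pseudotrajectories are naturally controlled only on bounded pieces of orbit, so a careful diagonal/limiting argument is required to cover arbitrary $x\in X$. A possibly cleaner alternative is spectral --- assume for contradiction that $\sigma(T)\cap\mathbb{T}\ne\varnothing$ and use approximate eigenvectors at some $\lambda\in\sigma(T)\cap\mathbb{T}$ to either manufacture a bounded pseudotrajectory that cannot be shadowed, or exhibit a sequence in $E^c(T)$ whose limit leaves $E^c(T)$, contradicting its closedness. Either route hinges on converting the soft hypothesis ``shadowing $+$ $E^c(T)$ closed'' into a genuine spectral gap off $\mathbb{T}$.
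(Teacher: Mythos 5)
Note first that the paper itself does not prove this lemma: it is imported verbatim from Lee and Morales \cite[Theorem 3]{lm}, so there is no internal argument to measure yours against. Judged on its own, your forward implication is fine, but the converse --- which is the substance of the statement --- contains two genuine gaps, both located exactly where you flag the crux.

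First, the splitting $X=M_s\oplus M_u$. Shadowing the damped pseudotrajectory $\beta_n T^n(x)$ produces a point $u$ with $\|T^n(u)-\beta_n T^n(x)\|\le\epsilon$ for all $n$, hence only $\limsup_{n\to+\infty}\|T^n(u)\|\le\epsilon$; it does not give $T^n(u)\to 0$. So $u$ lands in $H_\epsilon(T)$, not in $M_u$, and $x-u$ need not lie in $M_s$. This is precisely the discrepancy between $H(T)$ and $H_r(T)$ that motivates the notion of $r$-homoclinic point in this paper, and closedness of $E^c(T)$ does not obviously bridge it: you would still have to show that a point whose orbit is eventually $\epsilon$-small for every $\epsilon$ (after rerunning the construction with better constants and passing to a limit) actually has an orbit converging to $0$, which is where the real work in \cite{lm} lives. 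Second, even granting the splitting, your derivation of uniform contractivity fails: the uniform boundedness principle applied to $\{T^n|_{M_s}\}_{n\ge 0}$ yields only power-boundedness, and pointwise decay $T^n(x)\to 0$ on $M_s$ does not imply $r(T|_{M_s})<1$. For instance, the diagonal operator on $\ell^2(\mathbb{N})$ with entries $1-\frac{1}{k}$, $k\ge 2$, sends every orbit to $0$ yet has spectral radius $1$. To obtain a genuine spectral gap off $\mathbb{T}$ you must invoke shadowing (or the closedness of $E^c(T)$) again at this stage, not merely the spectral radius formula. Your alternative spectral route has the same status: nothing in the sketch converts an approximate eigenvector for $\lambda\in\sigma(T)\cap\mathbb{T}$ into a bounded pseudotrajectory that provably cannot be shadowed. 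As written, the proposal is a plausible plan rather than a proof.
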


\begin{proof}[Proof of Theorem \ref{theorem characterize hyperbolicity}]
Firstly, if $T$ is hyperbolic, $T$ has shadowing property and $E^{c}\left( T \right) = \left \{ 0 \right \}$ closed (see \cite[Page 3 and Proposition 19]{Bernades1}). Therefore, $H_r\left( T \right) = \left \{ 0 \right \}$ by \eqref{eq inclusion} for every $r>0$.\\
\indent Conversely, we assume that $T$ has shadowing property and $H_r\left( T \right)$ is closed for some $r>0$. Then $E^{c} \left( T \right) \subseteq \overline{ H _r \left( T \right) }$ by \eqref{eq inclusion} and $E^{c} \left( T \right)$ is closed. Applying Lemma \ref{lemma T hyperbolic iff EcT closed}, $T$ is a hyperbolic operator. As a direct consequence, invertible shadowing operator $T$ on Banach space $X$ is hyperbolic if and only if it has no nonzero $r$-homoclinic points for every $ r>0$.
\end{proof}

\begin{proof}[Proof of Theorem \ref{theorem classification shadowing}]
From Lemma \ref{lemma Hr(T) is invariant subspace} and shadowing property of $T$, $\overline{H_r \left( T \right)}$ is a closed $T$-invariant subspace of $X$ for all $ r >0$. We now assume that $T$ does not satisfy ISP then $\overline{H_r \left( T \right)}$ is either $\left \{ 0 \right \}$ or $X$.\\
\indent First, if $\overline{H_r\left( T \right) = \left \{ 0 \right \}}$ then $E^{c} \left( T \right) \subseteq \overline{H_r\left( T \right)} = \left \{ 0 \right \}$ . Therefore, $T$ is hyperbolic by Lemma \ref{lemma T hyperbolic iff EcT closed}. That means, $T$ is either a uniform contraction (for $r\left( T \right) <1$) or a uniform expansion (for $r \left( T \right) >1$). On the other side, $H_r\left( T \right) \subseteq E^{c} \left( T \right)\subseteq  \overline{H_r\left( T\right)}=X$ implies that $E^{c} \left( T \right)$ is dense.\\
\indent Totally, if $T$ is a shadowing operator on Banach space $X$ that does not satisfy ISP then one of the alternatives (i), (ii) or (iii) must hold for $T$. Otherwise, $T$ satisfies ISP in item (iv).
\end{proof}

It is well-known that every generalized hyperbolic operator is shadowing (see \cite[Theorem A]{Bernades1}). In addition, hyperbolicity and shadowing are preserved under linear conjugacy. We will prove a similar result for generalized hyperbolicity in next lemma.
\begin{lemma}
\label{main lemma}
Let $L \in B \left( X \right)$ be a generalized hyperbolic operator in which $X$ is a Banach space. For every invertible operator $H: X \longrightarrow X$, $HLH^{-1}$ is generalized hyperbolic.
\end{lemma}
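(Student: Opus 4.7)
The plan is to transport the generalized hyperbolic splitting for $L$ through $H$, and then check that conjugation by $H$ preserves the relevant structural properties. Since $L$ is generalized hyperbolic, fix a decomposition $X = M \oplus N$ into complementary closed subspaces with $L(M)\subset M$, $L^{-1}(N)\subset N$, and $L_M, L_N^{-1}$ uniform contractions. Set $T := HLH^{-1}$, and propose the decomposition $X = H(M) \oplus H(N)$.

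First I would verify that $H(M)$ and $H(N)$ are closed complementary subspaces of $X$. Since $H$ is an invertible bounded operator, $H^{-1}$ is bounded (by the open mapping theorem), so $H$ sends closed sets to closed sets; hence $H(M)$ and $H(N)$ are closed subspaces. Complementarity follows since every $x \in X$ decomposes uniquely as $x = H(H^{-1}x) = Hm + Hn$ with $H^{-1}x = m+n$, $m \in M$, $n \in N$, and this decomposition is unique because $H$ is injective.

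Next I would verify the inclusions: $T(H(M)) = HLH^{-1}(H(M)) = HL(M) \subset H(M)$, and symmetrically $T^{-1}(H(N)) = HL^{-1}(N) \subset H(N)$. The remaining content is the uniform contraction condition. Here I would use the restrictions $H_M := H|_M : M \to H(M)$ and $H_N := H|_N : N \to H(N)$, which are bijective bounded operators between Banach spaces (the targets are Banach since closed in $X$), hence topological isomorphisms by the open mapping theorem. From $T \circ H = H \circ L$ we read off the intertwining
\begin{equation*}
T_{H(M)} = H_M \, L_M \, H_M^{-1}, \qquad T_{H(N)}^{-1} = H_N \, L_N^{-1} \, H_N^{-1},
\end{equation*}
so $T_{H(M)}$ and $L_M$ are similar via $H_M$, and likewise for the $N$-side. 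Similar operators share the same spectrum, hence the same spectral radius, so
\begin{equation*}
r\bigl(T_{H(M)}\bigr) = r(L_M) < 1 \quad\text{and}\quad r\bigl(T_{H(N)}^{-1}\bigr) = r(L_N^{-1}) < 1,
\end{equation*}
which is exactly the uniform contraction conclusion of Definition \ref{def generalized hyperbolic operators}.

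I do not anticipate a hard obstacle: the only mildly technical point is justifying that $H_M$ is a Banach-space isomorphism onto $H(M)$, which is routine from the open mapping theorem once $H(M)$ is known to be closed. Everything else is algebraic bookkeeping with the similarity $T = HLH^{-1}$ and the observation that spectral radii are similarity invariants.
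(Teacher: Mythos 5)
Your proposal is correct and follows essentially the same route as the paper: both transport the splitting to $X = H(M)\oplus H(N)$, verify closedness and the invariance inclusions, and conclude $r\left( T_{H(M)} \right) = r\left( L_M \right) < 1$ (and likewise on the $N$-side). The only cosmetic difference is that you invoke similarity-invariance of the spectral radius directly, whereas the paper derives the same inequality by an explicit Gelfand-formula computation with the bound $\left\| H L^n H^{-1} \mid \widehat{M} \right\| \le \left\| H \right\| \left\| L^n \mid M \right\| \left\| H^{-1} \right\|$.
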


\begin{proof}
By definition, there  exists a splitting of subspaces of $X$ w.r.t $L$, which is
$X = M \oplus N$,
where $M$ and $N$ are, respectively, the stable and the unstable subspace of $X$ corresponding to $L$.

Since $H$ is invertible in the assumption, there is a subspace decomposition of the image $H \left( X \right) =X$, which is
  
\begin{align*}
    X = \widehat{M} \oplus \widehat{N}
\end{align*}
  
where $\widehat{M} = H \left( M \right)$ and $\widehat{N} = H \left( N \right)$.\\
\indent
Obviously, $\widehat{M}$ and $\widehat{N}$ are closed subspaces of $X$ because $M$ and $N$ are closed subspaces of $X$ and $H$ is an invertible operator. Consequently, we obtain
  
 \begin{align*}
     \left( HLH^{-1} \right) \left( \widehat{M} \right) &= \left( HL \right) \left( H^{-1} \left( \widehat{M} \right) \right)\\
     & = HL \left( M \right) \quad \left( \text{because } H^{-1} \left( \widehat{M} \right) = H^{-1} \left( H \left(M \right) \right) = M\right)\\
     & \subseteq H \left( M \right) \quad \left( \text{because } L  \text{ is generalized hyperbolic}   \right)\\
     &= \widehat{M}.
 \end{align*}
   
\indent Hence, $\left( HLH^{-1}\right) \left( \widehat{M}\right) \subseteq \widehat{M}$. Similarly,
   
 \begin{align*}
  \left( HL^{-1} H^{-1} \right) \left( \widehat{N} \right) &= HL^{-1} \left( H^{-1} \left( \widehat{N} \right) \right)\\
  &= HL^{-1} \left( H^{-1} \left( H \left( N \right) \right) \right)\\
  &= HL^{-1} \left( N \right)\\
  & \subseteq H \left( N \right) \quad \left( \text{because } L \text{ is generalized hyperbolic}\right) \\
  &= \widehat{N} .
\end{align*}
  
Therefore, $\left( HL^{-1} H^{-1} \right)\left( \widehat{N} \right) \subseteq \widehat{N}$.\\
\indent 
Let us show that $r_1 = r \left( HLH^{-1} \mid \widehat{M} \right)$ and $r_2 = r \left( HL^{-1} H^{-1} \mid \widehat{N} \right)$ are less than $1$. To complete these tasks, for an arbitrary positive integer $n \in \mathbb{N}$, we have the following observations.
  
\begin{align*}
    & \left( HLH^{-1} \right) ^n = \left( HLH^{-1} \right) \dots \left( HLH^{-1} \right) = H L^nH^{-1};\\
    &\left( HL^{-1} H^{-1}\right) ^n = \left( HL^{-1} H^{-1} \right) \dots \left( HL^{-1} H^{-1} \right) = H \left( L^{-1} \right) ^n H^{-1}.
 \end{align*}
   
 Based on Gelfand's formula for calculating the spectral radius,
   
 \begin{align}
     r_1 &= r \left( HLH^{-1} \mid \widehat{M} \right) \\
     &= \lim _{n \rightarrow \infty} \left \| \left( HLH^{-1} \mid \widehat{M} \right) ^n \right \| ^{\frac{1}{n}} \notag \\
     &= \lim _{n \rightarrow \infty} \left \| HL^nH^{-1} \mid \widehat{M} \right \| ^{\frac{1}{n}}. \label{eq 3}
 \end{align}
   
 Since $H$ is an invertible operator from $X$ onto itself, by Bounded Inverse Theorem, $H^{-1}$ is also invertible and bounded.\\
\indent If we take an arbitrary vector $\widehat{v} \in \widehat{M} = H \left( M \right)$ then there exists a vector $v\in M$ such that $v = H^{-1} \left( \widehat{v} \right)$ because $H$ and $H^{-1}$ are invertible operators. So,
   
 \begin{align*}
    \left( HL^nH^{-1}\mid \widehat{M} \right) \left( \widehat{v} \right) &= HL^n \left( H^{-1} \left( \widehat{v} \right) \right) \quad \left( \text{ by definition} \right)\\
    & = HL^n \left( v \right) \quad \left( \text{because } H^{-1}  \left( \widehat{v}\right) =v \right)\\
    &= \left( HL^n \mid M  \right) \left( v \right).
 \end{align*}
   
 As a result,
   
 \begin{align*}
     \left \| \left( HL^nH^{-1} \mid \widehat{M } \right) \left( \widehat{v} \right) \right \| &= \left\| \left( HL^n \mid M\right) \left( v \right) \right \|\\
     & \le \left \| HL^n \mid M \right \| \left \| v \right \| \\
     &=  \left \| HL^n \mid M \right \| \left \| H^{-1} \left( \widehat{v} \right) \right \|\\
     & \le \left \| \left( HL^n \mid M \right)\right \| \left \| H^{-1} \right \| \left \| \widehat{v} \right \| .
 \end{align*}
   
 By definition,
   
 \begin{align*}
     \left \| HL^nH^{-1} \mid \widehat{M} \right \| &= \sup_{\left \| \widehat{v} \right \| =1, \widehat{v} \in \widehat{M}} \left \| \left( HL^nH^{-1} \mid \widehat{M} \right) \left( \widehat{v} \right) \right \| \\
     &\le \sup_{\left \| \widehat{v} \right \| =1, \widehat{v} \in \widehat{M}} \left \| HL^n \mid M \right \| \left\| H^{-1}
 \right \| \left \| \widehat{v}  \right \|.
 \end{align*}
   
 It follows that
   
 \begin{align}
     \left \| HL^nH^{-1} \mid\widehat{M} \right \| \le \left \| HL^n \mid M \right \| \left \| H^{-1} \right \| \label{eq 4}. 
 \end{align}
   
Substituting \eqref{eq 4} to \eqref{eq 3}, we get
  
\begin{align*}
    r_1 &= \lim _{n \rightarrow \infty} \left \| HL^nH^{-1} \mid \widehat{M} \right \| ^{\frac{1}{n}}\\
    & \le \lim _{n \rightarrow \infty} \left( \left \| HL^n \mid M \right \| \left \| H^{-1} \right \| \right) ^{\frac{1}{n}}\\
    & \le \lim _{n \rightarrow \infty} \left( \left \| H \right \| \left \| L^n \mid M \right \| \left \| H^{-1} \right \| \right) ^{\frac{1}{n}} \\
    &= \lim _{n \rightarrow \infty} \left( \left \| H \right \| ^{\frac{1}{n}} \left \| H^{-1} \right \| ^{\frac{1}{n}} \left \| L^n \mid M \right \| ^{\frac{1}{n}} \right)\\
    &= \lim_{n \rightarrow \infty} \left( \left \| L^n \mid M \right \| \right) ^{\frac{1}{n}} \\
    &= r \left( L \mid M \right) <1.
\end{align*}
  
The inequality $r_2 = r \left( HL^{-1} H^{-1} \mid \widehat{N} \right) <1$ is implied by repeating this process.\\
\indent Finally, according to the second definition in Section \ref{statemens of the results}, $HLH^{-1}$ is generalized hyperbolic.
\end{proof}

The space of generalized hyperbolic operators $T \in B \left( \mathbb{H} \right)$ on Hilbert space $\mathbb{H}$ is invariant under not only Aluthge transforms but also $\lambda$-Aluthge transforms. In light of Lemma \ref{main lemma}, we obtain our second result below.
\begin{proof}[Proof of Theorem \ref{proposition lambda Aluthge transform of GH}]
Since $T$ is invertible, $\left| T \right| ^{\lambda}$ is an invertible operator for every $\lambda \in \left( 0,1 \right)$. Besides, $T = U\left| T \right| $ implies that
  
\begin{align}
\label{eq calculate U}
    T \left| T \right| ^{-1}= U \left| T \right| \left| T \right| ^{-1} =U.
\end{align}
  
Substituting \eqref{eq calculate U} to the Aluthge transform of $T$, we obtain
  
\begin{align*}
    \Delta_{\lambda} \left( T \right) &= \left| T \right| ^{\lambda} U \left| T \right| ^{1-\lambda} \quad \left( \forall \lambda \in \left(0,1\right) \right)\\
    &=\left| T \right| ^{\lambda} \left( T \right) \left| T \right|^{-\lambda}.
\end{align*}
  
As $T$ is generalized hyperbolic, then, by using the previous equation and Lemma \ref{main lemma}, it follows that $\Delta _{\lambda}\left( T \right)$, $\lambda \in \left( 0,1 \right)$, is generalized hyperbolic.\\
\indent Conversely, let us suppose that the $\lambda$-Aluthge transform $\Delta_{\lambda} \left( T \right)$ of bounded linear operator $T$ on Hilbert space $\mathbb{H}$ is generalized hyperbolic for every $\lambda \in \left( 0,1 \right)$ then $\Delta_{\lambda} \left( T \right)$ is now invertible. That means $0 \notin \sigma \left( \Delta_{\lambda} \left( T \right) \right)$. Since $\sigma \left( T \right) = \sigma \left( \Delta _{\lambda} \left( T \right) \right)$, $0 \notin \sigma \left( T \right)$ and $T$ turns out to be invertible. Consequently,
  
\begin{align*}
    T &= U \left| T \right| = \left| T \right|^{-\lambda} \left( \left| T \right| ^{\lambda} U \left| T \right|^{1-\lambda}\right) \left| T \right|^{\lambda}\\
    &= \left| T \right|^{-\lambda} \left( \Delta_{\lambda}\left( T \right) \right) \left| T \right|^{\lambda}.
\end{align*}
  
is a conjugated operator with $\Delta_{\lambda} \left( T \right)$. Following Lemma \ref{main lemma}
 again, $T$ is now generalized hyperbolic.
 \end{proof}
As an application of these results, $\lambda$-Aluthge transformations $\Delta_{\lambda} \left( T \right)$ (for every $\lambda \in \left( 0,1 \right)$) preserve the space of shifted hyperbolic operators as follows.

\begin{corollary}
\label{Cor lambda Aluthge of SH}
An automorphism $T$ on Hilbert space $\mathbb{H}$ is shifted hyperbolic if and only if its $\lambda$-Aluthge transform is shifted hyperbolic for every $\lambda \in \left( 0,1 \right)$.
\end{corollary}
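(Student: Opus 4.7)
The plan is to reduce the corollary to two ingredients: the invariance of generalized hyperbolicity under $\Delta_\lambda$ (already proved in Theorem \ref{proposition lambda Aluthge transform of GH}) and the invariance of hyperbolicity under $\Delta_\lambda$. Once both are in hand, the defining characterization \emph{shifted hyperbolic} = \emph{generalized hyperbolic but not hyperbolic} makes the equivalence immediate.

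First I would record the conjugacy identity that was also the key observation in the proof of Theorem \ref{proposition lambda Aluthge transform of GH}. Since $T$ is an automorphism, $|T|$ is invertible and hence so is $|T|^\lambda$ for every $\lambda \in (0,1)$; combining the polar decomposition $T=U|T|$ with the definition of $\Delta_\lambda$ yields
\[
\Delta_\lambda(T)=|T|^\lambda U|T|^{1-\lambda}=|T|^\lambda\bigl(U|T|\bigr)|T|^{-\lambda}=|T|^\lambda\, T\, |T|^{-\lambda}.
\]
Thus $\Delta_\lambda(T)$ is similar to $T$ via the bounded invertible operator $|T|^\lambda$. Similarity preserves the spectrum, so $\sigma(\Delta_\lambda(T))=\sigma(T)$, and in particular
\[
\sigma(\Delta_\lambda(T))\cap\mathbb{T}=\varnothing\iff \sigma(T)\cap\mathbb{T}=\varnothing,
\]
which says precisely that $T$ is hyperbolic if and only if $\Delta_\lambda(T)$ is, for every $\lambda\in(0,1)$.

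Next I would invoke Theorem \ref{proposition lambda Aluthge transform of GH} to obtain the same equivalence for generalized hyperbolicity. Combining the two equivalences: $T$ is shifted hyperbolic iff $T$ is generalized hyperbolic and not hyperbolic, iff $\Delta_\lambda(T)$ is generalized hyperbolic and not hyperbolic, iff $\Delta_\lambda(T)$ is shifted hyperbolic. This holds for every $\lambda\in(0,1)$.

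I do not anticipate a real obstacle here, since the corollary is essentially bookkeeping built on top of Theorem \ref{proposition lambda Aluthge transform of GH}. The only point that requires a word of justification is the boundedness of $|T|^{-\lambda}$, which is exactly where the standing assumption that $T$ be an automorphism (rather than an arbitrary bounded operator) is used; this is the same hypothesis that legitimates the similarity argument in the previous theorem, so no new input is needed.
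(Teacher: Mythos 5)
Your proposal is correct and follows essentially the same route as the paper: both arguments combine the invariance of generalized hyperbolicity from Theorem \ref{proposition lambda Aluthge transform of GH} with the spectral equality $\sigma(T)=\sigma(\Delta_\lambda(T))$ (which the paper quotes and you rederive from the similarity $\Delta_\lambda(T)=|T|^{\lambda}T|T|^{-\lambda}$) and the characterization of shifted hyperbolic as generalized hyperbolic but not hyperbolic. If anything, your write-up is slightly more complete, since the paper only spells out the forward implication while you note that the argument is symmetric and yields both directions.
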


\begin{proof}
In Theorem 2 \cite{Cirilo1}, P. Cirilo et al. showed that if $L$ is a generalized hyperbolic operator then $L$ is either hyperbolic or shifted hyperbolic.\\
\indent Let us take an arbitrary shifted hyperbolic operator $T \in B \left( \mathbb{H} \right)$ in which $\mathbb{H}$ is a Hilbert space. Since $T$ is not hyperbolic, $\sigma \left( T \right)$ intersects with the unit circle $\mathbb{T}$ of complex plane $\mathbb{C}$. Moreover, the equality $\sigma \left( T \right) = \sigma \left( \Delta _{\lambda}\left( T \right) \right)$ (for arbitrary $ \lambda \in \left( 0,1 \right)$) provides that $\sigma \left( \Delta _{\lambda} \left( T \right) \right)$ intersects with $\mathbb{T}$ as well. With this, the generalized hyperbolicity of $\Delta _{\lambda} \left( T \right)$ is followed from Theorem \ref{proposition lambda Aluthge transform of GH}.\\
\indent To sum up briefly, $\Delta _{\lambda} \left( T \right)$ is a generalized hyperbolic operator with $\sigma \left( \Delta_{\lambda} \left( T \right) \right) \cap \mathbb{T} \neq \varnothing$ where $\mathbb{T}$ is the unit circle of $\mathbb{C}$. In other words, $\Delta_{\lambda} \left( T \right)$ is a shifted hyperbolic operator for every $\lambda \in \left( 0,1 \right)$.
\end{proof}

Moreover, if $T$ is generalized hyperbolic (w.r.t shifted hyperbolic) then $\Delta _{\lambda} ^{(n)} \left( T \right)$ is generalized hyperbolic (w.r.t shifted hyperbolic) for every $\lambda \in \left( 0,1 \right)$ and $n \in \mathbb{N}$ as shown below.

\begin{corollary}
\label{cor Aluthge iteratess of GH}
For arbitrary $0 < \lambda <1$ and $n \in \mathbb{Z}^{+}$, the $n$-th $\lambda$-Aluthge transform $\Delta_{\lambda} ^{(n)} \left( T \right)$ of generalized hyperbolic operator $T \in B \left( \mathbb{H} \right)$ is generalized hyperbolic where $\mathbb{H}$ is a Hilbert space.
\end{corollary}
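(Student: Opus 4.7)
The plan is to prove Corollary \ref{cor Aluthge iteratess of GH} by a straightforward induction on $n$, leveraging Theorem \ref{proposition lambda Aluthge transform of GH} as the single-step engine. Since the class of generalized hyperbolic operators is shown in that theorem to be invariant under each individual $\lambda$-Aluthge transform, iterating the transform should preserve generalized hyperbolicity at every stage.

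For the base case $n = 1$, I would simply observe that $\Delta_\lambda^{(1)}(T) = \Delta_\lambda(T)$, and apply Theorem \ref{proposition lambda Aluthge transform of GH} directly to $T$ to conclude that $\Delta_\lambda(T)$ is generalized hyperbolic. For the inductive step, I would assume that $\Delta_\lambda^{(k)}(T)$ is generalized hyperbolic for some $k \ge 1$. By the recursive definition
\[
\Delta_\lambda^{(k+1)}(T) = \Delta_\lambda\bigl(\Delta_\lambda^{(k)}(T)\bigr),
\]
Theorem \ref{proposition lambda Aluthge transform of GH} applied to the generalized hyperbolic operator $\Delta_\lambda^{(k)}(T)$ yields that $\Delta_\lambda^{(k+1)}(T)$ is also generalized hyperbolic.

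There is essentially no obstacle here, since the heavy lifting has already been done in Theorem \ref{proposition lambda Aluthge transform of GH}; the only point that requires a moment of care is verifying that the hypothesis of that theorem (namely, invertibility and the existence of the polar decomposition with the $\left|\cdot\right|^\lambda$ calculus) is inherited at each iteration. Since $T$ being generalized hyperbolic already forces $T$ to be an automorphism of $\mathbb{H}$, and since Theorem \ref{proposition lambda Aluthge transform of GH} shows that $\Delta_\lambda$ produces a generalized hyperbolic (hence invertible) operator from a generalized hyperbolic operator, the induction propagates without gap. The analogous statement for shifted hyperbolic operators follows in the same manner from Corollary \ref{Cor lambda Aluthge of SH}, and can be mentioned as a remark after the proof.
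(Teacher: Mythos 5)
Your proposal is correct and follows essentially the same route as the paper: the paper also applies Theorem \ref{proposition lambda Aluthge transform of GH} once to get the base case and then iterates (``repeating this process'') via the recursive definition $\Delta_{\lambda}^{(n)}(T)=\Delta_{\lambda}\bigl(\Delta_{\lambda}^{(n-1)}(T)\bigr)$, which is exactly your induction. Your added remark that invertibility is inherited at each stage is a small point of care the paper leaves implicit, but it does not change the argument.
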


\begin{proof}
 
For a generalized hyperbolic operator $T \in B \left( \mathbb{H} \right)$ on Hilbert space $\mathbb{H}$ and any $\lambda \in \left( 0,1 \right)$, the $\lambda$-Aluthge transform $\Delta_{\lambda} \left( T \right)$ is generalized hyperbolic by Theorem \ref{proposition lambda Aluthge transform of GH}. Applying the definition of Aluthge iterates, $\Delta_{\lambda} ^{(2)} \left( T \right) = \Delta_{\lambda} \left( \Delta_{\lambda} \left( T \right) \right)$ is generalized hyperbolic for arbitrary number $\lambda \in \left( 0,1 \right)$. Repeating this process for every $n \in \mathbb{N}$, $\Delta_{\lambda} ^{(n)} \left( T \right) = \Delta_{\lambda} \left( \Delta_{\lambda} ^{(n-1)} \left( T \right) \right)$ are generalized hyperbolic operators.
\end{proof}
As shown in the proof of Theorem \ref{proposition lambda Aluthge transform of GH} ,$T$ and $\Delta_{\lambda} \left( T \right)$ (for every $ \lambda \in \left(0,1\right)$) are conjugate. Hence, they share a variety of common dynamical systems behaviors including topological transitivity, chaotic behaviors and so on.\\
\indent It is well-known that both shadowing and generalized hyperbolicity are open with respect to the norm topology \cite{Cirilo1}. It is a consensus that the hyperbolic property is open too. In the following lemma, we add proof of the hyperbolic case for the sake of completeness.

\begin{lemma}
\label{pepero}
For every invertible hyperbolic operator $T:X \longrightarrow X$ on Banach space $X$, there is $\epsilon>0$ such that every linear operator $S:X \longrightarrow X$ with $\|T-S\|<\epsilon$ is hyperbolic.
\end{lemma}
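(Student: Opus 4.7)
The plan is to exploit the spectral characterization of hyperbolicity together with the classical resolvent perturbation argument. Since $T$ is an invertible hyperbolic operator, $\sigma(T)$ is a compact subset of $\mathbb{C}$ disjoint from $\mathbb{T}$, and $0\notin \sigma(T)$ because $T$ is invertible. Setting $K := \mathbb{T} \cup \{0\}$, we therefore have two disjoint compact sets $\sigma(T)$ and $K$, and any $S$ that keeps $K$ inside its resolvent set $\rho(S)$ will automatically be invertible (from $0\in \rho(S)$) and hyperbolic (from $\mathbb{T}\subset \rho(S)$).

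First I would control the resolvent on $K$. Since $K \subseteq \rho(T)$, the map $\lambda \mapsto R(\lambda, T) := (\lambda I - T)^{-1}$ is norm-continuous on $K$, so
\begin{align*}
M := \sup_{\lambda \in K} \|R(\lambda, T)\| < \infty.
\end{align*}
Set $\epsilon := 1/(2M)$. For any operator $S$ with $\|T-S\| < \epsilon$ and any $\lambda \in K$, the factorization
\begin{align*}
\lambda I - S = (\lambda I - T)\bigl[\,I - R(\lambda,T)(S-T)\bigr]
\end{align*}
is valid, and since $\|R(\lambda,T)(S-T)\| \le M\,\|T-S\| < 1/2$, the bracketed factor is invertible by the Neumann series. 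Hence $\lambda I - S$ is invertible for every $\lambda \in K$, i.e.\ $K \subseteq \rho(S)$.

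Since $0 \in K \subseteq \rho(S)$, the operator $S$ is invertible; since $\mathbb{T} \subseteq K \subseteq \rho(S)$, we have $\sigma(S) \cap \mathbb{T} = \varnothing$. By Definition~\ref{hyperbolic operators-OT}, $S$ is hyperbolic, completing the proof.

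There is no serious obstacle here; the only thing to watch is adjoining $\{0\}$ to $\mathbb{T}$ so that invertibility and separation from the unit circle are obtained simultaneously from a single resolvent bound. Everything else is the standard Neumann-series perturbation of the spectrum.
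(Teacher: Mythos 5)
Your proof is correct, and it takes a genuinely different (and more self-contained) route than the paper. The paper argues by contradiction: assuming a sequence of non-hyperbolic $S_n\to T$, it invokes upper semicontinuity of the spectrum (Halmos, Problem 103) to conclude that $\sigma(S_n)$ must eventually avoid $\mathbb{T}$, contradicting $\sigma(S_n)\cap\mathbb{T}\neq\varnothing$. You instead prove the needed spectral stability directly: compactness of $K=\mathbb{T}\cup\{0\}\subseteq\rho(T)$ gives a uniform resolvent bound $M$, and the factorization $\lambda I-S=(\lambda I-T)\bigl[I-R(\lambda,T)(S-T)\bigr]$ with the Neumann series shows $K\subseteq\rho(S)$ whenever $\|T-S\|<1/(2M)$. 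Your approach buys an explicit value of $\epsilon$ and avoids reliance on the cited semicontinuity result (which the paper in fact states loosely, writing $\sigma(S_n)\subseteq\sigma(T)$ where only eventual containment in a neighborhood of $\sigma(T)$ is available); in effect you are re-proving the relevant instance of upper semicontinuity from scratch. The paper's route is shorter on the page at the cost of an external reference. Your inclusion of $\{0\}$ in $K$ to secure invertibility of $S$ is a point the paper's proof glosses over entirely, and it is needed because the paper's Definition of hyperbolicity presupposes invertibility.
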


\begin{proof}
We prove it by a contradiction.\\
Let us suppose that such $\epsilon$ does not exist then there is a sequence of linear operators $S_n \longrightarrow T$ w.r.t norm topology such that $S_n$s are not hyperbolic for every $n \in \mathbb{N}$. Since $T$ is hyperbolic, $\sigma \left( T \right) \cap \mathbb{T} = \varnothing$ in which $\mathbb{T} $ is the unit circle in $\mathbb{C}$. Additionally, $\sigma \left( S_n \right) \cap \mathbb{T} \neq \varnothing$ (for every $n \in \mathbb{N}$). According to the upper semicontinuity of spectrum (see Problem 103 \cite{Halmos1}), $\sigma \left( S_n \right) \subseteq \sigma \left( T \right)$ which is a contradiction.
\end{proof}

Our third result provides a consequence of the previous lemma.

\begin{proof}[Proof of Theorem \ref{prop L is hyperbolic then T is hyperbolic}]
We suppose that the Aluthge iterates $\Delta^{(n)}\left( T \right)$ of a bounded linear operator $T$ on Hilbert space $\mathbb{H}$ converges to a hyperbolic operator $L \in B \left( \mathbb{H} \right)$ under norm topology. Since $L$ is hyperbolic and $\Delta ^{(n)} \left( T \right) \longrightarrow L$, there is $\epsilon>0$ and $N \in \mathbb{N}$ such that
  
\begin{align*}
    \left \| \Delta^{(n)} \left( T \right) -L \right \| < \epsilon, \quad n \in \mathbb{N}, n>N.
\end{align*}
  
By Lemma \ref{pepero}, $\Delta^{(n)} \left( T \right)$ is hyperbolic for each $n > N$. Furthermore, for every $n \in \mathbb{N}$,
\begin{align*}
    \sigma \left(\Delta \left( T \right) \right) = \sigma \left( \Delta ^{(1)} \left( T \right)\right) = \dots = \sigma \left( \Delta^{(n)}\left( T \right) \right)= \dots
\end{align*}
follows that $ \sigma \left( \Delta \left( T \right)\right) \cap \mathbb{T} = \varnothing$. Thus, $T$ is a hyperbolic operator.
\end{proof}
In linear dynamics, weighted shifts form an extremely important class of operators at our disposal to illustrate definitions and to provide examples and counterexamples. Many dynamical properties, including chaotic properties such as Devaney and Li-Yorke chaos, frequently hypercyclicity, topological mixing, as well as hyperbolic properties such as shadowing and expansivity, are analyzed and characterized for weighted shifts even before the general theory of the property under consideration reveal itself \cite{Bernades1, Bernades2, Gilmore1}. For this reason, the class of weighted shifts is also known to be a good model for understanding the dynamics of more complex operators, like, for instance, composition operators \cite{ddm2}. We now turn to investigate bilateral weighted shifts equipped with generalized hyperbolicity.\\

Let us consider $\mathbb{H} = l^2 \left( \mathbb{Z} \right)$. For any bounded sequence of complex numbers $\alpha = \left\{ \alpha _n \right \} _{n \in \mathbb{Z}}$,  the {\em weighted shift} with respect to $\alpha$ is
the linear map $W_\alpha:l^2(\mathbb{Z})\to l^2(\mathbb{Z})$ defined by
  
\begin{align*}
   W_{\alpha} e_n = \alpha_n e_{n+1},
\end{align*}
  
in which $\left \{ e_n \right \} _{n \in \mathbb{Z}}$ is the canonical orthonormal basis of $l^2 \left( \mathbb{Z} \right)$.\\
\indent The matrix representing $W_{\alpha}$ is an $\infty \times \infty$ matrix with the first lower diagonal entries are $\alpha_n$s, i.e
  
\begin{align*}
    T= W_{\alpha} = \begin{bmatrix}
    \dots &\vdots &\vdots &\vdots &\vdots &\vdots &\dots\\
    \dots  &0 &0 &0 &0 &0 &\dots \\
    \dots &\alpha_{-1} &0 &0 &0 &0 &\dots \\
    \dots &0 &\left[ \alpha_{0}\right] &0 &0 &0 &\dots\\
    \dots &0 &0 &\alpha_1 &0 &0 &\dots\\
    \dots &\vdots &\vdots &\vdots &\vdots &\vdots &\dots
    \end{bmatrix}.
\end{align*}
  
The spectrum of $T = W_{\alpha}$ is the annulus
  
\begin{align*}
    \sigma \left( T \right) = \left \{ \lambda \in \mathbb{C} \mid \lambda _1 \le \left| \lambda \right| \le \lambda _2   \right \}
\end{align*}
  
where $\lambda _1 = \liminf _{n \le 0, n \in \mathbb{Z}} \alpha _n$ and $\lambda _2 = \limsup _{n \ge 0, n \in \mathbb{Z}} \alpha _n$.
We also use the following result, which is stated in Ch\={o} et al. \cite{Cho1}.

\begin{lemma}\cite[Theorem 3.4]{Cho1}
\label{hyponormal weighted shift diverges}
Let $T \equiv W_{\alpha}$ be a hyponormal bilateral weighted shift on $l^2 \left( \mathbb{Z} \right)$ with a weight sequence $\alpha \equiv \left \{ \alpha _n \right \} _{n \in \mathbb{Z}}$. Let $a = \inf \left \{ \alpha _n \right \} _{n \in \mathbb{Z}}$ and $b = \sup \left \{ \alpha _n \right \} _{n \in \mathbb{Z}}$. Then $\left \{ \Delta ^{(n)}\left( T \right) \right \}_{n=1} ^{\infty}$ converges to a quasinormal operator in the norm topology if and only if $a=b$.
\end{lemma}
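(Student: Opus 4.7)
The plan is to reduce the claim to an explicit computation on the weight sequence. First, by conjugating $T = W_\alpha$ with a suitable diagonal unitary one may assume the weights $\alpha_n$ are positive. The polar decomposition then takes the form $U e_n = e_{n+1}$ and $|T| e_n = \alpha_n e_n$, so a direct calculation gives $\Delta(T) e_n = (\alpha_n \alpha_{n+1})^{1/2}\, e_{n+1}$. Thus $\Delta(T)$ is again a bilateral weighted shift, and iterating one obtains $\Delta^{(k)}(T) = W_{\alpha^{(k)}}$ where
\[
\alpha_n^{(k)} = \prod_{j=0}^{k} \alpha_{n+j}^{\binom{k}{j}/2^{k}}, \qquad n \in \mathbb{Z},\; k \in \mathbb{N}.
\]
So the question reduces to controlling these weighted geometric means of consecutive $\alpha_n$.

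For the easy direction, if $a = b$ then $\alpha_n$ is constant, so $T$ is already quasinormal and every $\Delta^{(k)}(T)$ equals $T$, giving trivial norm convergence to a quasinormal operator.

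For the converse, suppose $\Delta^{(k)}(T) \to L$ in norm with $L$ quasinormal. Taking matrix coefficients $\langle \Delta^{(k)}(T) e_n, e_{n+1}\rangle = \alpha_n^{(k)}$ shows the limit $L$ is itself a bilateral weighted shift $W_\gamma$ with $\gamma_n = \lim_k \alpha_n^{(k)}$. The standard characterization that a weighted shift is quasinormal if and only if its weights have constant modulus then forces $\gamma_n \equiv c$ for some $c$. Hyponormality of $W_\alpha$ is equivalent to $\alpha_n \leq \alpha_{n+1}$ for all $n \in \mathbb{Z}$, so $(\alpha_n)$ is monotone and bounded, with $\alpha_n \to a$ as $n \to -\infty$ and $\alpha_n \to b$ as $n \to +\infty$. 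Since $\alpha_n^{(k)}$ is a weighted geometric mean of $\alpha_{n}, \dots, \alpha_{n+k}$, one has $\alpha_n^{(k)} \to a$ as $n \to -\infty$ and $\alpha_n^{(k)} \to b$ as $n \to +\infty$ for each fixed $k$. Norm convergence $\sup_n |\alpha_n^{(k)} - c| \to 0$ therefore forces $c = a$ and $c = b$, so $a = b$.

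The main obstacle is the last step: one must legitimately interchange the limits $n \to \pm\infty$ and $k \to \infty$, or equivalently argue that uniformity in $n$ of the convergence of $\alpha_n^{(k)}$ forces the limiting constant to coincide with both the infimum and supremum of $\alpha$. This is handled by noting that the weighted geometric mean is continuous and monotone in each coordinate on intervals bounded away from zero (which applies here since $a > 0$ by invertibility considerations implicit in the hyponormal setting), so $\alpha_n^{(k)} \in [\inf_{j \in [n,n+k]} \alpha_j,\, \sup_{j \in [n,n+k]} \alpha_j] \subset [a,b]$, with the infimum over $n$ attained arbitrarily close to $a$ and the supremum arbitrarily close to $b$. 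Uniform convergence then immediately yields $a = b$.
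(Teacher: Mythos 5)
The paper does not actually prove this lemma: it is imported verbatim as Theorem 3.4 of Ch\={o}--Jung--Lee \cite{Cho1}, so there is no internal proof to compare against. Your argument is a correct, essentially self-contained derivation along the same computational lines as the cited source: the identity $\Delta(W_\alpha)=W_{\alpha^{(1)}}$ with $\alpha^{(1)}_n=(\alpha_n\alpha_{n+1})^{1/2}$, the binomial formula $\alpha^{(k)}_n=\prod_{j=0}^k\alpha_{n+j}^{\binom{k}{j}/2^k}$ (which checks out by Pascal's rule), the observation that for each fixed $k$ the new weights still accumulate at $a$ as $n\to-\infty$ and at $b$ as $n\to+\infty$ because hyponormality makes $(\alpha_n)$ monotone, and finally the fact that norm convergence of weighted shifts is uniform convergence of the weight sequences, which is incompatible with a constant-weight (quasinormal) limit unless $a=b$. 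The easy direction and the identification of the limit as a weighted shift via matrix coefficients are also fine.

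Two small points deserve tightening. First, the characterization ``a weighted shift is quasinormal iff its weights have constant modulus'' requires the weights to be nonvanishing (the shift with weights $\ldots,0,0,1,1,\ldots$ is quasinormal but not of constant modulus); this is harmless here because the paper's standing assumption makes $T$ an automorphism, so $a>0$ and hence $\alpha^{(k)}_n\ge a>0$ for all $n,k$, but it should be stated. Second, your parenthetical ``$a>0$ by invertibility considerations implicit in the hyponormal setting'' is not right as phrased --- hyponormality alone does not bound the weights away from zero --- and in any case your final inequality $\sup_n\bigl|\alpha^{(k)}_n-c\bigr|\ge\max(c-a,\,b-c)\ge(b-a)/2$ goes through without any positivity of $a$, so the caveat about geometric means being continuous only on intervals bounded away from zero can simply be dropped.
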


Now, we can prove our last result.

\begin{proof}[Proof of Theorem \ref{Aluthge iterates of SH weighted shifts is divergent}]
We assume that $T = shift \left( \dots, \alpha_{-2}, \alpha _{-1}, \left[ \alpha _0 \right], \alpha_1, \alpha _2, \dots  \right)$ is a shifted hyperbolic bilateral weighted shift on $l^2 \left( \mathbb{Z} \right)$. Let $\left \{ e_n \right \}_{n \in \mathbb{Z}}$ be the canonical orthonormal basis of $l^2 \left( \mathbb{Z} \right)$ then
\begin{equation*}
    T e_n = \alpha e_{n+1}, \quad \forall n \in \mathbb{Z}.
\end{equation*}
The spectrum of each $\Delta ^{(n)} \left( T \right)$ is annulus
  
\begin{align*}
    \sigma \left( \Delta ^{(n)} \left( T \right) \right) = \left \{ \lambda ^{(n)} \in \mathbb{C} \mid \left| \lambda ^{(n)}_1 \right| \le \left| \lambda ^{(n)}  \right| \le \left| \lambda_2 ^{(n)}\right| \right \},
\end{align*}
  
in which $\left| \lambda_1 ^{(n)} \right| = \liminf _{n \in \mathbb{Z}} \alpha_i^{(n)}$ and $\left| \lambda_2 \right| = \limsup_{n\in \mathbb{Z}} \alpha_i ^{(n)}$. Furthermore, the intersection of each $\sigma \left( \Delta ^{(n)} \left( T \right) \right)$ and the unit circle $\mathbb{T}$ of $\mathbb{C}$ follows that $\left| \lambda_1 ^{(n)} \right| <1$ and $\left| \lambda_2 ^{(n)} \right|>1 $ for every $n \in \mathbb{N}$.

Let us suppose that $\Delta ^{(n)} \left ( T \right)$ converges to a quasinormal bilateral weighted shift $L = shift \left( \dots ,\alpha, \left[ \alpha \right],\alpha, \dots, \right)$. By Lemma \ref{hyponormal weighted shift diverges},
the spectrum of $L$ is the circle
  
\begin{align*}
    \sigma \left( L \right) =\left \{ \lambda ' \in \mathbb{C} \mid \left| \lambda' \right| = \alpha \right \}.
\end{align*}
  
\indent On the other hand, the semicontinuity of spectrum leads to
  
\begin{align*}
    \sigma \left( \Delta ^{(n)} \left( T \right) \right) \subseteq \sigma \left( L \right)
\end{align*}
  
that is impossible. Finally, we completed the proof.
\end{proof}

We also prove that the converse of Theorem \ref{prop L is hyperbolic then T is hyperbolic} may not hold. More precisely, even if $T$ is a hyperbolic operator, $\Delta ^{(n)} \left( T \right)$ may be divergent as shown in the following theorem.

We recall the following definition.
\begin{definition}
\label{def hyponormal operator} \cite[Definition 1.22]{IBJ2}
Let $T \in B \left( \mathbb{H} \right)$ in which $\mathbb{H}$ is a Hilbert space. $T$ is said to be hyponormal if $T^*T \ge TT^*$.
\end{definition}

\begin{proof}[Proof of Theorem \ref{Theorem Aluthge iterates of hyperbolic operator}]
Let $X = l^2 \left( \mathbb{Z} \right)$ and $T = shift \left( \dots, 2,2,\left[ 2 \right] , 3, 3, \dots \right)$ be the weighted bilateral shift  on $X$ w.r.t to the bounded sequence of positive numbers $\alpha = \left \{ \alpha _n \right \} _{n \in \mathbb{Z}}$ where
  
\begin{align*}
    \alpha_n = \left \{ \begin{matrix} 2 \quad \text{if } n \le 0 \\
    3 \quad \text{if } n >0 \end{matrix} \right..
\end{align*}
  
\indent It can be observed that $\alpha _n \le \alpha _{n+1}$, for every $ n \in \mathbb{Z}$. Therefore, $T$ is a hyponormal bilateral shift (ref. \cite[Paragraph 1, Part 2]{Ham1}) with $\sup \alpha_n \neq \inf \alpha_n$ . By Lemma \ref{hyponormal weighted shift diverges}, the
iterates $\Delta ^{(n)} \left( T \right)$ diverges under the norm topology. In addition, from
  
\begin{align*}
    \sigma \left( T \right) = \left \{ \lambda \in \mathbb{C} \mid 2 \le \left| \lambda \right| \le 3 \right \},
\end{align*}
  
it follows that $T$ is hyperbolic because $\sigma \left( T \right) \cap \mathbb{T} = \varnothing$, in which $\mathbb{T}$ is the unit circle of complex plane $\mathbb{C}$.
\end{proof}
\section*{Declarations}
The authors did not receive support from any organization for the submitted work.
The author certifies that she has no affiliations with or involvement in any organization or entity with any financial interest or non-financial interest in the subject matter or materials discussed in this manuscript.  \\

\subsection*{Acknowledgment}
The author would like to express her gratitude to Professor Sang Hoon Lee, who is working at Department of Mathematics, Chungnam National University (CNU), for his kind support and encouragement during her Doctoral course at CNU.

\end{document}